\newtheorem{proposition}{Proposition}[section]
\newtheorem{theorem}[proposition]{Theorem}
\newtheorem{lemma}[proposition]{Lemma}
\newtheorem{corollary}[proposition]{Corollary}
\newtheorem{problem}[proposition]{Problem}
\theoremstyle{definition}
\newtheorem{definition}[proposition]{Definition}
\theoremstyle{remark}
\newtheorem{remark}[proposition]{Remark}
\renewcommand\l@subsection{\@tocline{2}{0pt}{2pc}{5pc}{}}
\renewcommand\l@subsubsection{\@tocline{3}{0pt}{4pc}{10pc}{}}
\def\C{\mathbb{C}}
\def\HH{\mathbb{H}}
\def\ov{\overline}
\def\rank{{\rm rank}\,}
\def\sdet{{\rm Sdet}}
\def\Sp{{\rm Sp}}
\def\su{{\rm SU}}
\def\uu{{\rm U}}
\newcommand{\cat}{\mathop{\mathrm{cat}}}
\newcommand{\diag}{\mathrm{diag}}		
\newcommand{\Hprod}[1]{\langle #1\rangle}
\newcommand{\block}[2]{\addtolength{\arraycolsep}{-3pt}\begin{bmatrix}#1\cr#2\end{bmatrix}}
\title[Relative SVD]{Relative singular value decomposition 
\\and applications to LS-category}
\thanks{The three authors are  partially supported by the MINECO and 
FEDER research project MTM2016-78647-P}
\author{E. Mac\'{\i}as-Virg\'os}
\address[E. Mac\'{\i}as-Virg\'os]{Institute of Mathematics, University of Santiago de Compostela, 15782 Spain.}
 \email{quique.macias@usc.es}
\author{M.J. Pereira-S\'aez}
\address[M.J. Pereira-S\'aez]{Facultade de Econom\'{\i}a e Empresa, Universidade da Coru\~na, 15071 Spain.}
\email{maria.jose.pereira@udc.es}
\author{Daniel Tanr\'e}
\address[Daniel Tanr\'e]{D\'epartement de Math\'ematiques, Facult\'e des Sciences et Technologies, Universit\'e de Lille, 59655 Villeneuve d'Ascq Cedex, France.}
\email{Daniel.Tanre@univ-lille.fr}
\begin{document}

\allowdisplaybreaks

\begin{abstract}
Let $\Sp(n)$ be the symplectic group of  quaternionic $(n\times n)$-matrices. 
For any $1\leq k\leq n$, an element $A$ of $\Sp(n)$
can be decomposed in
$A=
\begin{bmatrix}
\alpha&T\cr
\beta&P
\end{bmatrix}$
with $P$ a $(k\times k)$-matrix. 
In this work, starting from a singular value decomposition of $P$, we obtain
what we call a relative singular value decomposition of $A$.
This feature is well adapted for the study of the quaternionic Stiefel manifold $X_{n,k}$, and we 
apply it to the determination of the 
Lusternik-Schnirelmann category of $\Sp(k)$ in $X_{2k-j,k}$, for $j= 0,\,1,\,2$.\end{abstract}

\keywords{ Matrices over quaternions; Symplectic group; Quaternionic Stiefel manifold; Lusternik-Schnirelmann category.}
\subjclass[2010]{Primary 15A33; Secondary 22E20, 22F30, 55M30}

\maketitle

\section{Introduction}

Let $\HH^n$ be the quaternionic $n$-space (with the structure of a right $\HH$-vector space) endowed with the Hermitian product $\Hprod{u,v}=u^*v$. 
For $0< k\leq n$, we denote by $\Sp(n)$ the Lie group of matrices, $A\in\HH^{n\times n}$, such that
$AA^*=I_{n}$ and by $X_{n,k}$ 
the Stiefel manifold of linear maps $\phi\colon \HH^k \to \HH^n$ which preserve the  Hermitian product. Alternatively, the elements of $X_{n,k}$ are the orthonormal $k$-frames of $\HH^n$, represented by a matrix $x\in\HH^{n\times k}$ such that $x^*x=I_k$. Usually we shall write $x=\begin{bmatrix}T\cr P\cr\end{bmatrix}$, with $P\in\HH^{k\times k}$.
Let $\phi_0\in X_{n,k}$ 
be the inclusion $v\mapsto \block{0}{v}$,  represented by the matrix $x_0=\begin{bmatrix}0\cr I_k\cr\end{bmatrix}$.

The linear left action of $\Sp(n)$ on $X_{n,k}$ is transitive and the isotropy group of 
$x_{0}$
is isomorphic to $\Sp(n-k)$.
Therefore the Stiefel manifold $X_{n,k}$ is diffeomorphic to $\Sp(n)/\Sp(n-k)$ and there is a principal fibration
$$\Sp(n-k)\xrightarrow[]{\iota} \Sp(n)\xrightarrow[]{\rho} X_{n,k}.$$

If we write  
$A=
\begin{bmatrix}
\alpha&T\cr
\beta&P
\end{bmatrix}\in\Sp(n)$,
with $T\in\HH^{(n-k)\times k}$ and
$P\in \HH^{k\times k}$, the application $\rho\colon \Sp(n)\to X_{n,k}$ is defined by
$\rho(A)=\block{T}{P}$.
If $P\in\Sp(k)$, we may choose $T=0$ and get an element of $X_{n,k}$. This gives a canonical inclusion,
$$\iota_{n,k}\colon \Sp(k)\to X_{n,k}.$$

\noindent We  shall come back below on some aspects of this inclusion.
First, 
we characterize 
the matrices $P\in\HH^{k\times k}$  that can be completed 
with $T\in\HH^{ (n-k)\times k}$ for getting an element $\block{T}{P}\in X_{n,k}$.
In Proposition~\ref{prop:etrestiefelornot}, we  prove that such  $T$  exists
if, and only if,
the eigenvalues of $P^*P$ (that is, the singular values of $P$), belong to the interval $[0,1]$ and the multiplicity of the eigenvalue~1 is greater  than  or equal to 
 $2k-n$.

\medskip

Next, we use the well-known  (\cite{MR2990115}) singular value decomposition (SVD, in short) of 
 $P\in \HH^{k\times k}$ for the determination of the possible completions of it in an element of $X_{n,k}$. 
More precisely, in Theorem~\ref{thm:shapespn} starting from the SVD of 
$P\in\HH^{k\times k}$, satisfying the previous criterion, we describe the various matrices
of $\Sp(n)$ of the shape
$\begin{bmatrix}
\alpha&T\cr
\beta&P\cr
\end{bmatrix}$. This gives a ``relative SVD of a matrix in $\Sp(n)$''.

\medskip

We apply this  decomposition to the study of the Lus\-ter\-nik-Sch\-ni\-rel\-mann category (in short LS-category).
Let us recall first that an open subset $U$ of a topological space $X$ is called categorical 
  if $U$ is contractible in $X$.
The LS-category, $\cat X$, of  $X$ is defined as the least integer $m\geq 0$ such that $X$ admits a covering by $m+1$ categorical open sets (\cite{CornLuptOprTan2003}). 

The LS-category is a homotopy invariant
that turns out  to be useful in areas such as dynamical systems and symplectic geometry. But it is
also particularly difficult to compute. A longstanding problem is the determination of the LS-category of 
 Lie groups.
In the case of unitary and special unitary Lie groups,  
Singhof determined $\cat \uu(n)=n$ and $\cat\su(n)=n-1$ (\cite{Singhof75}), using eigenvalues. 
This method cannot be carried out for the symplectic groups $\Sp(n)$ 
due to the non-commutativity of  quaternions (\cite{MR2566482}). Some
progress has been made for small $n$ with
$\cat \Sp(2)=3$ (\cite{MR0182969}), $\cat \Sp(3)=5$  (\cite{MR2022385}),  
or with bounds as
$\cat \Sp(n)\leq \binom{n+1}{2}$ (\cite{MacPer2013}) and 
$\cat \Sp(n)\geq n+2$ when $n\geq 3$ (\cite{MR2039767}). In Proposition~\ref{prop:sp2},
we show how Theorem~\ref{thm:shapespn} supplies an explicit minimal categorical open cover of $\Sp(2)$.

Some partial results also exist for the LS-category of symplectic Stiefel manifolds.
 For instance, in \cite{MR2319267}, Nishimoto proves 
 $\cat X_{n,k}=k$ when $n\geq 2k$,  making use of eigenvalues of associated complex matrices. 
 Different techniques of proof have been given for this result, as the
 use of the Cayley transform in \cite{supercayley}, or Morse-Bott functions in \cite{KadzMimu2011}. 
 Let us also mention that Morse-Bott functions are also present in \cite{MR3621032}, \cite{MR3653866} 
 for the study of LS-category.
 Finally recall the existence of a lower bound for the LS-category of Stiefel manifolds,
 generally better than the classical cup-length,
 established by Kishimoto in \cite{MR2310478}, and recalled in Theorem~\ref{thm:kishi}.

\medskip

In this work, we study  the subspace LS-category of $\Sp(k)$ in $X_{n,k}$, denoted
$\cat_{X_{n,k}}\Sp(k)$. This means that we are
looking for  families of open sets in $X_{n,k}$ covering $\Sp(k)$ whose elements are contractible in $X_{n,k}$. We prove in  Propositions \ref{prop:spnrelative0},
\ref{prop:spnrelative1} and \ref{prop:spnrelative2}
that
$$\cat\nolimits_{X_{2k-j,k}}\Sp(k) \leq \cat \Sp(j), \quad \text{for\ } j=0,\,1,\,2,$$
 \noindent and we wonder if this is still true for any $j\geq 0$.
 
\medskip

{\bf Notations and Conventions.}
For any pair of square matrices (not necessarily of the same size) the relation $A\sim B$ means: ``$A$ is invertible if and only if $B$ is so." 

If $(t_{1},\dots,t_{q})$ is a sequence of quaternions, we denote by
$\diag(t_{i})_{q\times q}$ the $(q\times q)$-matrix having the $t_{i}$'s on the diagonal and 0 otherwise.


\section{Stiefel manifolds}
\begin{quote}
In this section, we consider a matrix $P \in\HH^{k\times k}$  and study the existence of a ``companion'' $T\in\HH^{(n-k)\times k}$ which gives an element 
$\block{T}{P}$ of $X_{n,k}$.
\end{quote}

\medskip

An element   of $X_{n,k}$ can be represented by a matrix
$x=\block{T}{P}$, with $T\in\HH^{(n-k)\times k}$ and
$P\in \HH^{k\times k}$. 
The preservation of the Hermitian product corresponds to the equation
$x^*x=I_{k}$, 
which becomes
$$T^*T+P^*P=I_{k}.$$

\begin{definition}\label{def:admissible}
A   matrix $P\in\HH^{k\times k}$ is 
{\em $n$-admissible}
 if there exists $T\in \HH^{(n-k)\times k}$
such that $\block{T}{P}\in X_{n,k}$.
The integer number $e=2k-n$ is called the \emph{excess} of $X_{n,k}$.
\end{definition}

Admissible matrices can be entirely characterized by eigenvalues.

\begin{proposition}\label{prop:etrestiefelornot}
A matrix $P\in\HH^{k\times k}$ is $n$-admissible
if, and only if,
the eigenvalues of $P^*P$ belong to the interval $[0,1]$ and the multiplicity of the eigenvalue~1 is greater than or equal to 
the excess $e=2k-n$.
\end{proposition}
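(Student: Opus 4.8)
The plan is to translate $n$-admissibility into a matrix equation and then read off both conditions from the singular value decomposition of $P$. Writing $x=\block{T}{P}$, the defining relation $x^*x=I_k$ becomes $T^*T+P^*P=I_k$, so $P$ is $n$-admissible if and only if there exists $T\in\HH^{(n-k)\times k}$ with
\[
T^*T=I_k-P^*P.
\]
Thus the whole question reduces to a single problem in quaternionic linear algebra: given the Hermitian positive semidefinite matrix $M:=I_k-P^*P$, for which sizes $(n-k)\times k$ does there exist $T$ with $T^*T=M$? I would make the singular values explicit from the start by fixing an SVD $P=V\Sigma W^*$ with $V,W\in\Sp(k)$ and $\Sigma=\diag(\sigma_i)_{k\times k}$, $\sigma_i\geq 0$ (this is exactly the decomposition cited from \cite{MR2990115}). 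Then $P^*P=W\Sigma^2 W^*$, so the eigenvalues of $P^*P$ are the $\sigma_i^2$, and $M=W(I_k-\Sigma^2)W^*$.

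Next I would establish the eigenvalue interval. The eigenvalues $\sigma_i^2$ are automatically in $[0,\infty)$. If a companion $T$ exists, then $I_k-P^*P=T^*T$ is positive semidefinite; conjugating by $W$, this says $\diag(1-\sigma_i^2)\geq 0$, i.e. every $\sigma_i\leq 1$. Hence all eigenvalues of $P^*P$ lie in $[0,1]$. This direction is straightforward once one uses that $u^*(T^*T)u=\Hprod{Tu,Tu}\geq 0$.

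The heart of the argument is the rank criterion, which I expect to be the main point to get right over the quaternions: \emph{$M$ can be written as $T^*T$ with $T$ of size $(n-k)\times k$ if and only if $\rank M\leq n-k$.} For the ``only if'' part, the identity $\ker(T^*T)=\ker T$ (again because $\Hprod{Tu,Tu}=0$ forces $Tu=0$) gives $\rank M=\rank T\leq n-k$, the last inequality since $T$ has only $n-k$ rows. For the ``if'' part I would build $T$ explicitly: set $d_i=1-\sigma_i^2\geq 0$ and $T_0=\diag(\sqrt{d_i})_{k\times k}\,W^*$, so that $T_0^*T_0=W\diag(d_i)W^*=M$. The rows of $T_0$ indexed by the $i$ with $d_i=0$ (equivalently $\sigma_i=1$) are zero, so $T_0$ has exactly $\rank M$ nonzero rows; discarding the zero rows and then padding back with enough zero rows to reach $n-k$ rows is possible precisely when $\rank M\leq n-k$, and yields the required $T$.

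Finally I would convert the rank bound into the multiplicity statement. If $m$ denotes the multiplicity of the eigenvalue $1$ of $P^*P$, i.e. the number of indices with $\sigma_i=1$, then $\rank M=\rank(I_k-\Sigma^2)=k-m$. The criterion $\rank M\leq n-k$ thus reads $k-m\leq n-k$, that is $m\geq 2k-n=e$. Combining this with the interval condition established above gives both implications of the Proposition. The only genuinely delicate ingredient is that all the familiar facts (real nonnegative spectrum of $P^*P$, existence of the SVD, well-definedness of rank, and the kernel identity) are being used in the noncommutative quaternionic setting; I would therefore lean on the SVD of \cite{MR2990115} rather than on any complex spectral theory, so that noncommutativity never enters the calculations.
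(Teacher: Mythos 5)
Your proof is correct and follows essentially the same route as the paper's: both reduce admissibility to writing $I_k-P^*P$ as $T^*T$, use positive semidefiniteness to confine the spectrum of $P^*P$ to $[0,1]$, invoke the kernel identity $\ker T^*T=\ker T$ (Lemma~\ref{lem:todo}) to bound $\rank(I_k-P^*P)$ by $n-k$, and construct $T$ explicitly from the SVD for the converse. Your packaging of the middle step as a standalone rank criterion is a mild reorganization, not a different argument.
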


 Let us notice that  the second condition is automatically verified if $e\leq 0$.

\begin{proof}
Let
$$P=U\,
\begin{bmatrix}I_{p\times p}&0&0\\
0&\diag(t_i)_{q\times q}&0\\
0&0&0_{r\times r}
\end{bmatrix}\,V^*
$$ 
be the SVD  of $P$, with $p+q+r=k$, $U,\,V\in\Sp(k)$, $p,q,r\geq 0$ and  $0<t_i<1$.

$\bullet$ If there exists $T\in\HH^{(n-k)\times k}$ such that
$\block{T}{P}\in X_{n,k}$,
the equality
$T^*T+P^*P=I_k$
implies
$$T^*T=V\,
\begin{bmatrix}
0_{p\times p}&0& 0\\
0&\diag(1-t_i^2)_{q\times q}&0\\
0&0&I_{r\times r}
\end{bmatrix}\,V^*\in \HH^{k\times k}.$$
 As $T^*T$ is hermitian semi-definite positive, we deduce 
$1-t_i^2> 0$ and
$0< t_i< 1$.
For any non-square matrix $T\in \HH^{(n-k)\times k}$, it is known that $\rank (T^*T)=\rank(T)$,
see Lemma~\ref{lem:todo}. This implies
$q+r\leq \min(n-k,k)$
and
$$p=k-(q+r)\geq k-\min(n-k,k)\geq 2k-n=e.$$

$\bullet$ Suppose now $t_i\in ]0,1[$ and $p\geq e$. We consider the matrix

$$T=\,
\begin{bmatrix}
0_{p'\times p}&0&0\\
0&\diag(s_i)_{q\times q}&0\\
0&0&&I_{r\times r}
\end{bmatrix}\,V^*,
$$
with $0<s_i=\sqrt{1-t_i^2}<1$ and $p'+q+r=n-k$. Then we have
$T^*T+ P^*P=I_k$ and 
$\block{T}{P}\in X_{n,k}$.
\end{proof}

Let us recall \emph{the Study determinant} (\cite{ASLAKSEN}) useful for the detection of inversible matrices.
As any quaternionic matrix $M\in\HH^{n\times n}$ can be written as $M=X+\mathbf{j}Y$ with 
$X,\,Y\in \C^{n\times n}$, we  associate to $M$ a complex matrix, $\chi(M)$, defined by
$$\chi(M)=\begin{bmatrix}
X&-\ov{Y}\,\,\cr
Y&\ov{X}
\end{bmatrix}
\in \C^{2n\times 2n}.$$
The Study determinant of $M$, defined by
$\sdet(M)=\sqrt{\det \chi(M)}$,  verifies the following properties.
\begin{enumerate}
\item The matrix $M$ is invertible if, and only if, $\sdet (M)\neq 0$.
\item If $M,\, N\in \HH^{n\times n}$, then $\sdet(MN)=\sdet(M)\;\sdet(N)$.
\item If $N$ is obtained from $M$ by adding  a left multiple of a row to another row
 or a right multiple of a column to another column, then we have
$\sdet(M)=\sdet(N)$.
\item If $M$ is a triangular matrix then $\sdet(M)$ equals $\vert m_{11}\cdots m_{nn}\vert$, the norm of the product of the elements of the diagonal.
\end{enumerate}

We complete these properties by the  following one, 
well adapted to the quaternionic matrices appearing in the last sections.

\begin{lemma}\label{lem:sylvester}
Let $M\in\HH^{m\times n}$ and $N\in\HH^{n\times m}$. Then we have
$$\sdet(I_m+MN)=
\sdet(I_n+NM).
$$
\end{lemma}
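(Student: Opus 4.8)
The statement is the quaternionic analogue of the Sylvester (Weinstein--Aronszajn) determinant identity, so the plan is to imitate the classical block-matrix proof, replacing the ordinary determinant by the Study determinant and using only its multiplicativity (property~(2)) together with its invariance under the elementary row and column operations of property~(3). The starting point is the square matrix
\[
B=\begin{bmatrix} I_m & -M \\ N & I_n \end{bmatrix}\in\HH^{(m+n)\times(m+n)},
\]
together with the elementary block matrix $E=\begin{bmatrix} I_m & 0 \\ -N & I_n \end{bmatrix}$.

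A direct multiplication gives the two factorizations
\[
E\,B=\begin{bmatrix} I_m & -M \\ 0 & I_n+NM \end{bmatrix},
\qquad
B\,E=\begin{bmatrix} I_m+MN & -M \\ 0 & I_n \end{bmatrix}.
\]
First I would check that $\sdet E=1$: the block $-N$ can be cleared by successively adding left multiples of the first $m$ rows to the last $n$ rows, which by property~(3) leaves $\sdet$ unchanged and turns $E$ into $I_{m+n}$, whose Study determinant is $1$ by property~(4). Applying the multiplicativity property~(2) to each factorization then yields $\sdet B=\sdet(EB)$ and $\sdet B=\sdet(BE)$.

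It remains to evaluate the two block-triangular matrices. In each of them the off-diagonal block $-M$ can again be removed by the elementary moves of property~(3) (right column operations for $EB$, left row operations for $BE$, the latter choice avoiding any invertibility assumption on the diagonal block), producing the block-diagonal matrices $\begin{bmatrix} I_m & 0 \\ 0 & I_n+NM \end{bmatrix}$ and $\begin{bmatrix} I_m+MN & 0 \\ 0 & I_n \end{bmatrix}$. A short computation straight from the definition $\sdet=\sqrt{\det\chi}$ shows that $\sdet\begin{bmatrix} I_p & 0 \\ 0 & C \end{bmatrix}=\sdet(C)$, and symmetrically for the other ordering: passing to $\chi$ and conjugating by the permutation matrix that regroups the identity block exhibits $\det\chi$ as $\det(I_{2p})\cdot\det\chi(C)=\det\chi(C)$, and taking the nonnegative square root gives the claim. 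Hence $\sdet(EB)=\sdet(I_n+NM)$ and $\sdet(BE)=\sdet(I_m+MN)$, and comparing both with $\sdet B$ gives $\sdet(I_m+MN)=\sdet B=\sdet(I_n+NM)$.

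The only delicate point is that the Study determinant is defined through the complexification $\chi$ rather than by an intrinsic cofactor expansion, so every block manipulation must genuinely be reduced to the single-row and single-column moves allowed by property~(3), and the block-diagonal evaluation must be read off from $\chi$; both are routine but should be carried out explicitly. A slicker alternative, which I would at least mention, bypasses the block algebra: extending $\chi$ to rectangular matrices so that $\chi(MN)=\chi(M)\chi(N)$ and hence $\chi(I_m+MN)=I_{2m}+\chi(M)\chi(N)$, the classical complex identity $\det(I_{2m}+\chi(M)\chi(N))=\det(I_{2n}+\chi(N)\chi(M))$ yields $\det\chi(I_m+MN)=\det\chi(I_n+NM)$, and taking square roots concludes. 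The cost of this shorter route is that one must first establish multiplicativity of $\chi$ on rectangular matrices, which is not among the properties listed above.
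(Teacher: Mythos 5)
Your proof is correct and is essentially the paper's own argument: both hinge on the two block triangularizations of $\begin{bmatrix} I_m & -M \\ N & I_n \end{bmatrix}$, the paper realizing them via the elementary row/column operations of property~(3) and you via multiplication by $E$ together with multiplicativity, which amounts to the same thing. You are merely more explicit about the final evaluation $\sdet\begin{bmatrix} I_p & 0 \\ 0 & C \end{bmatrix}=\sdet(C)$, which the paper leaves implicit.
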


\begin{proof}
This is a classical argument,
\begin{align*}
&\sdet \begin{bmatrix}
I_m+MN&-M\cr
0&I_n
\end{bmatrix}\\
=&
\sdet
\begin{bmatrix}
I_m&-M\cr
N&I_n
\end{bmatrix}
=\sdet
\begin{bmatrix}
I_{m}&0\cr
N&I_n+NM\cr
\end{bmatrix}.\qedhere
\end{align*}
\end{proof}

We end this section with the following lemma, used in the proof of Theorem~\ref{thm:shapespn}. 
It is a classical result and we give the proof for the 
convenience of the reader.

\begin{lemma}\label{lem:todo}
Let $M\in\HH^{m\times n}$ be a non-necessarily square quaternionic matrix. Then, we have
$\ker M^*M=\ker M$ and $\ker MM^*=\ker M^*$.
\end{lemma}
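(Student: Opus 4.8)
The plan is to prove the two equalities by a standard positivity argument, the only subtlety being that one must check that the relevant Hermitian form remains positive definite over $\HH$. I would treat the first equality $\ker M^*M=\ker M$ in detail and then obtain the second, $\ker MM^*=\ker M^*$, by applying the first to $M^*$ in place of $M$, using that $(M^*)^*=M$.

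For the first equality, the inclusion $\ker M\subseteq\ker M^*M$ is immediate: if $Mx=0$ then $M^*Mx=M^*0=0$, so $x\in\ker M^*M$. For the reverse inclusion, I would take $x\in\HH^n$ with $M^*Mx=0$ and multiply on the left by $x^*$, obtaining $(Mx)^*(Mx)=x^*M^*Mx=0$. Writing $u=Mx\in\HH^m$ with coordinates $u_1,\dots,u_m$, the quantity $u^*u=\sum_{i}\ov{u_i}\,u_i=\sum_i\norm{u_i}^2$ is a sum of squared quaternionic norms. The decisive step is the observation that this real number vanishes if and only if every $u_i=0$; hence $Mx=u=0$, which gives $x\in\ker M$ and completes the argument.

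The main (and essentially only) point to handle with care is this positive-definiteness over the quaternions: although $\HH$ is noncommutative, for a single quaternion $q=q_0+q_1\mathbf{i}+q_2\mathbf{j}+q_3\mathbf{k}$ one has $\ov{q}\,q=\norm{q}^2=q_0^2+q_1^2+q_2^2+q_3^2\in\R_{\geq 0}$, so the Hermitian form $\Hprod{u,u}=u^*u$ takes nonnegative real values and vanishes only at $u=0$. Once this is in place, the reasoning transfers verbatim from the complex case and no further computation is needed; in particular, replacing $M$ by $M^*$ yields $\ker MM^*=\ker M^*$ by exactly the same chain of implications.
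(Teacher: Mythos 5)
Your argument is correct and is essentially the paper's own proof: the forward inclusion is immediate, and the reverse inclusion follows from $\langle Mx,Mx\rangle=\langle x,M^*Mx\rangle=0$ together with positive definiteness of the quaternionic Hermitian form, with the second equality obtained by the same reasoning applied to $M^*$. Your extra care in justifying that $u^*u=\sum_i\norm{u_i}^2$ vanishes only at $u=0$ over $\HH$ is a welcome explicit check of a point the paper leaves implicit, but it does not change the route.
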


\begin{proof}
The inclusion $\ker M\subset \ker M^*M$ is direct. On the other hand, if $u\in\ker M^*M$,  we get
$|M(u)|^2=\langle Mu,Mu\rangle=\langle u,M^*Mu\rangle=0$ and $u\in \ker M$.
A similar argument gives the second equality.
\end{proof}


\section{Relative singular value decomposition  in  $\Sp(n)$}
\begin{quote}
In this section, we establish a ``relative singular value decomposition''  of  the elements of $\Sp(n)$. This structure
proves to be effective for the study of the injection $\Sp(k)\to X_{n,k}$ as it appears in Section~\ref{sec:levelone}.
\end{quote}

\medskip

\begin{theorem}\label{thm:shapespn}
For any $k\leq n$,  an element $A$ of $\Sp(n)$ can be written in blocks as follows,
$$A=\addtolength{\arraycolsep}{-2pt}\begin{bmatrix}
m
\begin{bmatrix}
I_{p'}&0&0\cr
0&\diag(\cos \theta_i)_{q\times q}&0\cr
0&0&0_r
\end{bmatrix}
\ell^*
&
m
\begin{bmatrix}
0_{p'\times p}&0&0\cr
0&-\diag(\sin \theta_i)_{q\times q}&0\cr
0&0&-I_r
\end{bmatrix}
b^*\cr 
&\cr
a\begin{bmatrix}
0_{p\times p'}&0&0\cr
0&\diag(\sin \theta_i)_{q\times q}&0\cr
0&0&I_r
\end{bmatrix}
\ell^*
&
a\begin{bmatrix}
I_{p}&0&0\cr
0&\diag(\cos \theta_i)_{q\times q}&0\cr
0&0&0_r
\end{bmatrix}
b^*
\end{bmatrix},
$$
with $\theta_i\in ]0,\pi/2[$, $a, b\in\Sp(k)$, $m,\ell\in\Sp(n-k)$, 
$p\geq 2k-n$,
$p+q+r=k$ and $p'+q+r=n-k$.
\end{theorem}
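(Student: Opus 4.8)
The plan is to read the statement as a quaternionic cosine--sine decomposition of $A$ adapted to the $(n-k,k)$ block partition, built out of the SVD of the corner $P$. Write the target right-hand side as $\Sigma$, with blocks $\Sigma_{11},\Sigma_{12},\Sigma_{21},\Sigma_{22}$ (top-left, top-right, bottom-left, bottom-right), so that the claim is the existence of $a,b\in\Sp(k)$ and $m,\ell\in\Sp(n-k)$ with $A=\begin{bmatrix}m&0\cr0&a\end{bmatrix}\Sigma\begin{bmatrix}\ell&0\cr0&b\end{bmatrix}^*$ and $\Sigma_{22}=\diag(I_p,\diag(\cos\theta_i)_{q\times q},0_r)$ equal to the SVD diagonal of $P$. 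First I would take the SVD $P=a\,\Sigma_{22}\,b^*$ from \cite{MR2990115}, which supplies $a,b$ and the angles $\theta_i\in\,]0,\pi/2[$. Since $P$ is the corner of a matrix of $\Sp(n)$ it is $n$-admissible, so Proposition~\ref{prop:etrestiefelornot} yields $p\geq 2k-n$; hence $p'=p-(2k-n)\geq 0$ and the size counts $p+q+r=k$, $p'+q+r=n-k$ are consistent. Replacing $A$ by $A'=\begin{bmatrix}I&0\cr0&a\end{bmatrix}^*A\begin{bmatrix}I&0\cr0&b\end{bmatrix}\in\Sp(n)$ reduces everything to the case $P=\Sigma_{22}=:D$, with off-diagonal blocks $T'=Tb$, $\beta'=a^*\beta$ and top-left block $\alpha'=\alpha$ unchanged.

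The symplectic identities $A'^*A'=I_n$ and $A'A'^*=I_n$ then give $T'^*T'=I_k-D^2$ and $\beta'\beta'^*=I_k-D^2$, both equal to $\diag(0_p,\diag(\sin^2\theta_i)_{q\times q},I_r)$. The tool I would use is the elementary principle that two quaternionic matrices with the same Gram matrix differ by a symplectic factor: if $X^*X=Y^*Y$, the assignment $Xu\mapsto Yu$ is a well-defined isometry of the column spaces by Lemma~\ref{lem:todo}, and extends to an element of $\Sp(\cdot)$ on the common ambient space; dually $XX^*=YY^*$ yields $Y=X\ell^*$. Since $\Sigma_{12}^*\Sigma_{12}=I_k-D^2=T'^*T'$, this produces $m\in\Sp(n-k)$ with $m^*T'=\Sigma_{12}$, and since $\Sigma_{21}\Sigma_{21}^*=I_k-D^2=\beta'\beta'^*$, it produces $\ell\in\Sp(n-k)$ with $\beta'\ell=\Sigma_{21}$.

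Forming $\tilde A=\begin{bmatrix}m&0\cr0&I\end{bmatrix}^*A'\begin{bmatrix}\ell&0\cr0&I\end{bmatrix}=\begin{bmatrix}m&0\cr0&a\end{bmatrix}^*A\begin{bmatrix}\ell&0\cr0&b\end{bmatrix}\in\Sp(n)$, every block is already in canonical form except the top-left one $G=m^*\alpha'\ell$. This coupling --- that the single pair $m,\ell$ also normalizes $\alpha'$, and with the \emph{same} angles $\theta_i$ reappearing in $\Sigma_{11}$ --- is the heart of the proof and the step I expect to be the main obstacle. I would extract it from the two relations read off $\tilde A^*\tilde A=I_n$: the off-diagonal one $G^*\Sigma_{12}=-\Sigma_{21}^*D$ and the diagonal one $G^*G=\Sigma_{11}^2=\diag(I_{p'},\diag(\cos^2\theta_i)_{q\times q},0_r)$. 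Because $\diag(\sin\theta_i)_{q\times q}$ is invertible, the first relation determines the last two row-blocks of $G$: the middle one is $(0,\diag(\cos\theta_i)_{q\times q},0)$ and the last $r$ rows vanish. Substituting this into $G^*G=\Sigma_{11}^2$ and applying Lemma~\ref{lem:todo} kills the two surviving off-diagonal blocks in the top row-block, leaving $G=\diag(G_{11},\diag(\cos\theta_i)_{q\times q},0_r)$ with $G_{11}\in\Sp(p')$ only unitary. Finally I would absorb $G_{11}$: right-multiplying $\tilde A$ by the block-diagonal symplectic matrix $\begin{bmatrix}R&0\cr0&I\end{bmatrix}$ with $R=\diag(G_{11}^*,I_q,I_r)$ turns $G_{11}$ into $I_{p'}$ while fixing $\Sigma_{12},\Sigma_{21},D$ --- it only recombines the first $p'$ columns, which are absent from $\Sigma_{12},D$ and zero in $\Sigma_{21}$. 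This gives $\tilde A=\Sigma$, and redefining $\ell$ as $\ell R$ yields the asserted decomposition; the remaining verifications are bookkeeping with the dimension counts.
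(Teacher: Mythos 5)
Your proposal is correct and follows essentially the same route as the paper: take the SVD of the corner $P$, recover $T$ and $\beta$ up to symplectic factors $m,\ell\in\Sp(n-k)$ from the coincidence of the Gram matrices with $I_k-D^2$ (the paper does this by explicit column construction, you by the equivalent isometry-extension principle), and then show that the relations coming from $A^*A=I_n$ force the remaining block up to a unitary $p'\times p'$ corner. Your final step of absorbing that corner $G_{11}$ into $\ell$ is the same exploitation of the free columns that the paper uses when it re-chooses $m_i:=\alpha\ell_i$ for $1\leq i\leq p'$, so the two arguments differ only in packaging.
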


\begin{proof} 
Let $A=
\begin{bmatrix}
\alpha&T\cr
\beta&P
\end{bmatrix}\in\Sp(n)$,
with $P\in\HH^{k\times k}$. The SVD  of $P$ gives
$$P=
a
\begin{bmatrix}
I_{p}&0&0\cr
0&\diag( c_i  )_{q\times q}&0\cr
0&0&0_r
\end{bmatrix}
b^*,
$$
with $a,\,b\in\Sp(k)$, $p+q+r=k$ and $0<c_i<1$. From $T^*T+P^*P=I_{k}$, we deduce
\begin{eqnarray}
T^*T&=&I_{k}-
b\begin{bmatrix}
I_{p}&0&0\cr
0&\diag(c_i^2)_{q\times q}&0\cr
0&0&0_r
\end{bmatrix}
b^*\nonumber \\
&=&
b\begin{bmatrix}
0_{p}&0&0\cr
0&\diag(s^2_{i})_{q\times q}&0\cr
0&0&I_r
\end{bmatrix}
b^*,\label{equa:T*T}
\end{eqnarray}
with $0<s_{i}<1$ and $s_{i}^2=1-c_i^2$.
We proceed in three steps, determining successively $T$, $\beta$ and $\alpha$.

\medskip

\emph{Step 1.} Let $p'$ such that $p'+q+r=n-k$. The matrix $T$ can be written as
\begin{equation}\label{equa:T}
T=m
\begin{bmatrix}
0_{p'\times p}&0&0\cr
0&-\diag(s_i)_{q\times q}&0\cr
0&0&-I_r
\end{bmatrix}
b^*
\end{equation}
if, and only if,  the columns $(b_{i})_{1\leq i\leq k}$
and $(m_{j})_{1\leq j\leq n-k}$ of the matrices $b$ and $m$, respectively, verify:
\begin{eqnarray}
T(b_i)=0
&\text{ for }&
1\leq i\leq p, \label{equa:T1}\\
T(b_{p+i})=-m_{p'+i}s_{i}
&\text{ for }&
1\leq i\leq q, \label{equa:T2}\\
T(b_{p+q+i})=m_{p'+q+i}
&\text{ for }&
1\leq i\leq r.   \label{equa:T3}
\end{eqnarray}
We therefore have to establish these three properties from (\ref{equa:T*T}).
The equation (\ref{equa:T1}) is a direct consequence of Lemma~\ref{lem:todo}.
Next, the equations (\ref{equa:T2}) and (\ref{equa:T3}) define the vectors
$(m_{p'+i})_{1\leq i\leq q+r}$. They constitute an orthogonal system because the same holds for the corresponding $(b_i)$. 
 In fact, from (\ref{equa:T*T}) and (\ref{equa:T2}), we deduce for $1\leq i\leq q$,
$$\langle m_{p'+i},m_{p'+i}\rangle 
=
\frac{1}{s_{i}^2}\langle T(b_{p+i}), T(b_{p+i}) \rangle
=
\frac{1}{s_{i}^2}\langle b_{p+i}, T^*T(b_{p+i}) \rangle
=1,
$$
and analogousy $\langle m_{p'+i},m_{p'+j}\rangle=0$ for $i\neq j$.

We also have
$\langle m_{p'+q+i},m_{p'+q+j}\rangle= \delta_{i,j}$
for $1\leq i,j\leq r$.
Thus it suffices to complete $(m_{p+i})_{1\leq i\leq q+r}$ in an orthonormal basis to get  the  announced expression of $T$.

\medskip

\emph{Step 2.} We determine $\beta\in\HH^{k\times (n-k)}$ such that $\beta\beta^*+PP^*=I_{k}$. This equality gives
\begin{eqnarray}
\beta\beta^*
&=&
I_{k}-
a
\begin{bmatrix}
I_{p}&0&0\cr
0&\diag(c_i^2)_{q\times q}&0\cr
0&0&0_r\cr
\end{bmatrix}
a^*\nonumber \\
&=&
a\begin{bmatrix}
0_{p}&0&0\cr
0&\diag(s^2_{i})_{q\times q}&0\cr
0&0&I_r
\end{bmatrix}
a^*.\label{equa:betabeta*}
\end{eqnarray}
The argumentation developed in the first step brings a matrix $\ell\in\Sp(n-k)$ such that
\begin{equation}\label{equa:beta*}
\beta^*=
\ell
\begin{bmatrix}
0_{p'\times p}&0&0\cr
0&\diag(s_i)_{q\times q}&0\cr
0&0&I_r
\end{bmatrix}
a^*.
\end{equation}
As in the first step, the columns $(\ell_{p'+i})_{1\leq i\leq q+r}$ are explicitly determined, but for the family
$(\ell_{i})_{1\leq i\leq p'}$ the only requirement is to have an orthonormal basis
$(\ell_{j})_{1\leq j\leq p'+q+r}$.

\medskip
\emph{Step 3.} We are reduced to decompose the matrix
$\alpha\in\HH^{(n-k)\times (n-k)}$. From $AA^*= I_n=A^*A$, we deduce
\begin{align}
\alpha\alpha^*+TT^*
=\ &
I_{n-k}, \label{equa:alphaalpha*}\\
\alpha^*T+\beta^*P
=\ &
0,   \label{equa:alphaalpha}\\
\alpha^*\alpha+\beta^*\beta
=\ &
I_{n-k}. \label{equa:alpha*alpha}
\end{align}
As before, we denote by $(m_{j})_{1\leq j\leq n-k}$ and $(\ell_{j})_{1\leq j\leq n-k}$ the columns of the matrices 
$m$ and $\ell$ respectively.
Replacing $T$ by its value (\ref{equa:T}), we deduce from (\ref{equa:alphaalpha*}) that the family 
$(m_{p'+q+i})_{1\leq i\leq r}$ is a basis of $\ker\alpha\alpha^*=\ker\alpha^*$, see Lemma~\ref{lem:todo}.

\smallskip

The replacement of $T$, $\beta$, $P$ by their value in (\ref{equa:alphaalpha}) gives the equality
$$\alpha^*m
\begin{bmatrix}
0_{p'\times p}&0&0\cr
0&-\diag(s_i)_{q\times q}&0\cr
0&0&-I_r
\end{bmatrix}
=-\ell
\begin{bmatrix}
0_{p'\times p}&0&0\cr
0&\diag(s_ic_{i})_{q\times q}&0\cr
0&0&0_r
\end{bmatrix},
$$

which implies the relations
\begin{align}
\alpha^*m_{p^\prime+i}=\ell_{p^\prime + i}\,c_i,\quad &\text{for\ } 1\leq i\leq q,\\
\alpha^*m_{p^\prime+q+i}=0, \quad &\text{for\ } 1\leq i\leq r.
\end{align}
Thus, for proving
\begin{equation}\label{equa:alpha}
\alpha=m
\begin{bmatrix}
I_{p'}&0&0\cr
0&\diag(c_{i})_{q\times q}&0\cr
0&0&0_r
\end{bmatrix}
\ell^*,
\end{equation}
it remains to establish  
\begin{equation}\label{equa:ultima}
\alpha \ell_{i}=m_{i},  \quad \text{for\ } 1\leq i\leq p'.
\end{equation}
For that, starting from an orthonormal basis $(\ell_{j})_{1\leq j\leq n-k}$ built in  Step~2, we have to prove that we  could have taken
$m_{i}=\alpha \ell_{i}$ for $1\leq i\leq p'$  in order to complete an orthonormal basis $(m_{j})_{1\leq j\leq n-k}$ as  we built in Step 1.

\smallskip

From (\ref{equa:alpha*alpha}) and (\ref{equa:beta*}), we deduce
$$\alpha^*\alpha=\ell
\begin{bmatrix}
I_{p'}&0&0\cr
0&\diag(c^2_{i})_{q\times q}&0\cr
0&0&0_r
\end{bmatrix}
\ell^*,$$
which implies $\alpha^*\alpha \ell_{i}=\ell_{i}$,
for all $1\leq i\leq p'$.
We prove now the orthonormality of $(m_{j})_{1\leq j\leq n-k}$. 

\smallskip

$\bullet$ Let $1\leq j,k\leq  p'$. We have
$$\langle m_{j},m_{k}\rangle=
\langle \alpha \ell_{j},\alpha \ell_{k}\rangle=
\langle \ell_{j},\alpha^*\alpha \ell_{k}\rangle =
\langle \ell_{j},\ell_{k}\rangle,$$
which gives an orthogonality relation for $j\neq k$ and $\langle m_{j},m_{j}\rangle=1$.

\smallskip

$\bullet$ Let $1\leq j \leq p'$ and $1\leq k\leq q$. We have:
\begin{eqnarray*}
\langle m_{j}, m_{p'+k} \rangle
&=&
\langle \alpha \ell_{j}, m_{p'+q+k} \rangle
=
\langle \ell_{j}, \alpha^*m_{p'+q+k} \rangle
\\
&=&
\langle \ell_{j}, \ell_{p'+k} \rangle c_{k}
=0.
\end{eqnarray*}

\smallskip
$\bullet$ Let $1\leq j\leq p'$ and $1\leq k\leq r$. We have:
\begin{eqnarray*}
\langle m_{j}, m_{p'+q+k} \rangle
&=&
\langle \alpha \ell_{j}, m_{p'+q+k}  \rangle
=
\langle \ell_{j}, \alpha^* m_{p'+q+k}  \rangle
\\
&=&
\langle \ell_{j},  0 \rangle=0. \qedhere
\end{eqnarray*}
\end{proof}

The following particular case of Theorem~\ref{thm:shapespn} corresponds to  $k=1$ and $2k-n\leq 0$.

\begin{corollary}\label{cor:shapespn}
Let $n\geq 2$. Any element of $\Sp(n)$ can be written as 
$$P=\begin{bmatrix}
m
\begin{bmatrix}
I_{n-2}&0\cr
0&\cos \theta
\end{bmatrix}
\ell^*
&
m\block{0}{ -\sin\theta} E\cr
&\cr
[0\; \sin\theta]\ell^*&(\cos\theta) E
\end{bmatrix},
$$
with $m,\,\ell\in \Sp(n-1)$, $\cos\theta\in [0,1]$, $E\in \Sp(1)$.
\end{corollary}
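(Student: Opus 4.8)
The plan is to derive Corollary~\ref{cor:shapespn} directly from Theorem~\ref{thm:shapespn} by specializing the parameters to the case $k=1$, $n\geq 2$. Since $k=1$, the block $P$ is a single quaternion $P\in\HH^{1\times 1}=\HH$, and the constraints become $p+q+r=1$ and $p'+q+r=n-1$, with the excess condition $p\geq 2k-n=2-n\leq 0$ being automatically satisfied. The essential observation is that $q+r\leq k=1$, so the index triple $(p,q,r)$ can only take the values $(1,0,0)$, $(0,1,0)$, or $(0,0,1)$; I would first note that these three cases correspond respectively to a singular value equal to $1$, a singular value strictly between $0$ and $1$, and a singular value equal to $0$.

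Next I would unify these three cases into the single displayed expression in the corollary by introducing the angle $\theta$. The key is that the scalar $a\in\Sp(1)$ (which is a single unit quaternion since $k=1$) and the scalar $b\in\Sp(1)$ from the SVD of $P$ can be reorganized. Writing the $(1\times 1)$-SVD as $P=a\,(\cos\theta)\,b^*$ with $\cos\theta\in[0,1]$, I would set $E=ab^*\in\Sp(1)$ (or an appropriately normalized unit quaternion) so that the bottom-right scalar block $a\,(\cos\theta)\,b^*$ becomes $(\cos\theta)E$ after factoring, matching the stated form $(\cos\theta)E$. The three cases $(1,0,0)$, $(0,1,0)$, $(0,0,1)$ then correspond to $\cos\theta=1$, $\cos\theta\in{}]0,1[$, and $\cos\theta=0$; allowing $\cos\theta$ to range over the closed interval $[0,1]$ absorbs all three into one formula, which is exactly why the corollary states $\cos\theta\in[0,1]$ rather than the open interval appearing in the theorem.

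With this identification, the remaining blocks of Theorem~\ref{thm:shapespn} collapse as follows: the top-left $(n-1)\times(n-1)$ block $m\,\diag(I_{p'},\cos\theta,0_r)\,\ell^*$ becomes $m\begin{bmatrix}I_{n-2}&0\cr0&\cos\theta\end{bmatrix}\ell^*$ since $p'+q+r=n-1$ collapses to an $(n-2)$-dimensional identity part together with the single $\cos\theta$ (or $0$) entry; the top-right column $m\,\diag(0,-\sin\theta,-I_r)\,b^*$ reduces to the single column $m\begin{bmatrix}0\cr-\sin\theta\end{bmatrix}E$; and the bottom-left row $a\,\diag(0,\sin\theta,I_r)\,\ell^*$ reduces to $[0\;\sin\theta]\ell^*$. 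Here $m,\ell\in\Sp(n-1)$ carry over directly from the theorem.

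I do not anticipate a genuine obstacle, since this is a pure specialization; the only point requiring care is the bookkeeping of how the three discrete index cases merge into the continuous parameter $\theta\in[0,\pi/2]$ with $\cos\theta\in[0,1]$, and how the two scalars $a,b$ of the SVD combine into the single unit quaternion $E$. The main step is therefore verifying that the boundary values $\cos\theta=0$ and $\cos\theta=1$ in the corollary correctly reproduce the degenerate block structures (which in the theorem are encoded by the vanishing or non-vanishing of the index blocks $p$, $q$, $r$ rather than by $\theta$ hitting an endpoint), and checking that the sign conventions on $\sin\theta$ and the placement of $E$ are consistent across all three cases.
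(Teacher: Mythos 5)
Your specialization to $k=1$ is exactly what the paper intends (it states the corollary without proof as the particular case $k=1$, $2k-n\leq 0$ of Theorem~\ref{thm:shapespn}), and your enumeration of the index triples $(p,q,r)=(1,0,0),(0,1,0),(0,0,1)$ and their merging into the single parameter $\cos\theta\in[0,1]$ is correct. The one detail worth making explicit is that the unit quaternion $a$ does not simply vanish: setting $E=ab^*$ forces you to replace $m$ and $\ell$ by $m\,\diag(I_{n-2},\bar a)$ and $\ell\,\diag(I_{n-2},\bar a)$ (so that, e.g., $a[0\;\sin\theta]\ell^*$ becomes $[0\;\sin\theta](\ell')^*$), hence $m,\ell$ do not ``carry over directly'' as you claim, though this is harmless since the modified matrices still lie in $\Sp(n-1)$.
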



\section{Background on LS-category}\label{sec:backLS}
\begin{quote}
We recall  basic definitions and properties of the Lus\-ter\-nik-Sch\-ni\-rel\-mann category 
(LS-category in short). We also
state the results on the LS-category of Stiefel manifolds obtained by T.~Nishimoto
(\cite{MR2319267}) and D.~Kishimoto (\cite{MR2310478}) as well as the technique for the construction of
categorical open subsets, introduced by the authors in \cite{supercayley}.
\end{quote}

\medskip

The definition of LS-category has been recalled in the introduction, see
\cite{CornLuptOprTan2003} for more details. If  $X$ is an $(m-1)$-connected $CW$-complex, then 
there is the upper bound,
$$\cat X\leq (\dim X)/m.$$
As 
$\dim X_{n,k}=
\dim \Sp(n)-\dim\Sp(n-k)
=
k(4n-2k+1)$, we get (see \cite[Proposition 2.1]{MR0431239} for the connectivity of $X_{n,k}$)
\begin{equation}\label{equa:dimconnec}
\cat X_{n,k}
\leq
 \frac{k(4n-2k+1)}{4(n-k)+3}.
\end{equation}
A  lower bound is given by the cup length in the cohomology algebra but, 
for Stiefel manifolds, there is also a lower bound, due to Kishimoto.

\begin{theorem}[\cite{MR2310478}]\label{thm:kishi}
We have
$$\cat X_{n,k}\geq
\left\{
\begin{array}{ccl}
k
&\text{if}&
n\geq 2k-1,\\
k+1
&\text{if}&
n=2k-2\;\text{or}\; n=2k-3,\\
k+2
&\text{if}&
n\leq 2k-4.
\end{array}\right.$$
\end{theorem}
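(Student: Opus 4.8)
This is the lower bound of Kishimoto \cite{MR2310478}; here is how I would organise a proof, building each regime on the previous one. For the \emph{universal bound} $\cat X_{n,k}\geq k$, recall that the cohomology $H^*(X_{n,k};\Z)$ is an exterior algebra $\Lambda(y_{4(n-k)+3},y_{4(n-k)+7},\dots,y_{4n-1})$ on $k$ generators of odd degree $4j-1$, $n-k< j\leq n$. Their product $\mu=y_{4(n-k)+3}\cdots y_{4n-1}$ sits in degree $\sum_{j=n-k+1}^{n}(4j-1)=k(4n-2k+1)=\dim X_{n,k}$, so $\mu$ is the fundamental class and is nonzero. Hence the cup-length of $X_{n,k}$ equals $k$, giving $\cat X_{n,k}\geq k$. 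This settles $n\geq 2k-1$ and is the floor for the remaining cases.

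\emph{From cup-length to category weight.} To gain the strict improvements I would replace cup-length by the strict category weight $\mathrm{swgt}$ (see \cite{CornLuptOprTan2003}): for every nonzero $u\in\widetilde H^*(X)$ one has $\cat X\geq\mathrm{swgt}(u)$, with $\mathrm{swgt}(u)\geq 1$ in general, $\mathrm{swgt}$ superadditive under cup products, and, crucially, $\mathrm{swgt}(\theta(x))\geq\mathrm{swgt}(x)+1$ for any stable cohomology operation $\theta$ that vanishes on suspensions (a secondary or higher operation). Since $\mu\neq 0$, it then suffices to show that $\mu$, or its mod~$2$ reduction, carries weight $\geq k+1$ when the excess $e=2k-n$ lies in $\{2,3\}$, and weight $\geq k+2$ when $e\geq 4$.

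\emph{Locating the extra weight.} Work mod~$2$ and model the bottom of $X_{n,k}$ by the stunted quaternionic projective space $\HH P^{n-1}/\HH P^{n-k-1}$, recalling $H^*(\HH P^\infty;\Z/2)=\Z/2[u]$ with $|u|=4$ and $\mathrm{Sq}^4(u^{j})=j\,u^{j+1}$. As $e$ increases, the bottom generators of $X_{n,k}$ become values of cohomology operations on the higher ones. Because $\mathrm{Sq}^4$ is \emph{stable} it does not by itself raise weight; the decisive point is that exactly where the primary $\mathrm{Sq}^4$-relations degenerate --- a phenomenon controlled by the parities of the binomial coefficients above, and hence by the quaternionic periodicity $4$ --- the relevant generator is instead hit by a genuine \emph{secondary} operation, which does raise the weight. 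Counting the generators thereby affected, namely one when $e\in\{2,3\}$ and two (or one of weight $3$) when $e\geq 4$, reproduces precisely the thresholds $e=2$ and $e=4$ of the statement.

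I expect the crux to be exactly this last count: identifying the secondary operations, proving their nonvanishing on the relevant classes, and checking that the number of weight-raised generators matches the stated ranges. This is a metastable-range computation of the attaching maps, equivalently the $k$-invariants, of the associated stunted projective space, and it is the genuine content of \cite{MR2310478}; the surrounding category-weight formalism is purely formal by comparison.
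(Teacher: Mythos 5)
This statement is not proved in the paper at all: it is Kishimoto's theorem, imported verbatim from \cite{MR2310478}, so there is no internal proof to compare against. Judged as a standalone argument, your proposal fully establishes only the first of the three cases. The cup-length computation is correct and complete: $H^*(X_{n,k};\Z)$ is exterior on $k$ odd-degree generators whose product lands in degree $k(4n-2k+1)=\dim X_{n,k}$ and is the fundamental class, so $\cat X_{n,k}\geq k$ for all $n\geq k$. That much is standard and settles the case $n\geq 2k-1$.

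The two sharper bounds are exactly where the content of the theorem lies, and your sketch leaves them unproved. Three concrete problems. First, the weight-raising principle you invoke is incoherent as stated: a \emph{stable} operation commutes with suspension and therefore cannot ``vanish on suspensions'' unless it is zero; the usable statements (Rudyak, Strom) concern operations of higher \emph{order} (secondary and beyond), and you would need to quote one precisely and verify its hypotheses on the classes at hand. Second, you never identify the secondary operations in question, the relations in the Steenrod algebra from which they arise, or the generators on which they are claimed to be nonzero; the assertion that ``exactly where the primary $\mathrm{Sq}^4$-relations degenerate \dots\ the relevant generator is instead hit by a genuine secondary operation'' \emph{is} the theorem, and you explicitly defer it to \cite{MR2310478}. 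Third, nothing in the sketch explains why the thresholds fall at excess $e=2$ and $e=4$ rather than elsewhere --- the appeal to ``parities of binomial coefficients'' and ``quaternionic periodicity'' is not a computation --- and the degree bookkeeping in your model is off: the generating cells of $X_{n,k}$ sit in degrees $\equiv 3\pmod 4$, so the correct comparison object is James's quasi-projective space, not $\HH P^{n-1}/\HH P^{n-k-1}$ itself. As it stands the proposal is a plausible roadmap for cases two and three, not a proof; if the goal is only to justify the statement as used in this paper, the citation to \cite{MR2310478} is the appropriate (and the paper's actual) justification.
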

In the particular case $n\geq 2k$, Nishimoto has computed the LS-category of $X_{n,k}$,
using  the number of eigenvalues of an associated complex matrix.

\begin{theorem}[\cite{MR2319267}]\label{thm:nishi}
If $n\geq 2k$ then $\cat X_{n,k}=k$.
\end{theorem}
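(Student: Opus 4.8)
The plan is to prove the two inequalities $\cat X_{n,k}\geq k$ and $\cat X_{n,k}\leq k$ separately, the first being immediate and the second carrying all the content. For the lower bound, observe that $n\geq 2k$ forces $n\geq 2k-1$, so Theorem~\ref{thm:kishi} already gives $\cat X_{n,k}\geq k$; alternatively, the cohomology of $X_{n,k}=\Sp(n)/\Sp(n-k)$ is an exterior algebra on $k$ generators of odd degree whose product is nonzero, so the cup-length equals $k$ and bounds the category from below.

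For the upper bound I would exhibit a covering of $X_{n,k}$ by $k+1$ open sets, each contractible in $X_{n,k}$. Fix $k+1$ distinct reference values $a_0,\dots,a_k\in(0,1)$ and set
$$W_j=\{\,x=\block{T}{P}\in X_{n,k} : \sdet(P^*P-a_jI_k)\neq 0\,\},\qquad 0\leq j\leq k.$$
Each $W_j$ is open, since the Study determinant is continuous and property~(1) identifies $W_j$ with the locus where $P^*P-a_jI_k$ is invertible. Because the Hermitian matrix $P^*P$ has at most $k$ distinct eigenvalues while there are $k+1$ values $a_j$, the pigeonhole principle gives $\bigcup_{j=0}^k W_j=X_{n,k}$. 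On $W_j$ the spectrum of $P^*P$ splits into the part below $a_j$ and the part above $a_j$, and the corresponding eigenspaces $V_-(x),V_+(x)$ furnish an orthogonal decomposition $\HH^k=V_-(x)\oplus V_+(x)$ depending continuously on $x\in W_j$.

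The heart of the argument is to contract each $W_j$ inside $X_{n,k}$. Using the relative singular value decomposition of Theorem~\ref{thm:shapespn}, I would deform the singular values $c_i$ of $P$, pushing every singular value squared exceeding $a_j$ up to $1$ and every one below $a_j$ down to $0$, while simultaneously correcting the companion block $T$ so that $T^*T+P^*P=I_k$ holds throughout; such a family is supplied by Theorem~\ref{thm:shapespn}, and admissibility is preserved because the excess $e=2k-n\leq 0$, cf.\ Proposition~\ref{prop:etrestiefelornot}. This retracts $W_j$ onto the normal-form stratum $Y_j$ where $P^*P$ has eigenvalues only $0$ and $1$: on $V_+$ the frame sits isometrically in the standard subspace $\{0\}\times\HH^k$, and on $V_-$ it sits isometrically in the complement $\HH^{n-k}\times\{0\}$. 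The hypothesis enters exactly here, for $\dim V_-$ may be as large as $k$, and an isometric embedding of $V_-$ into $\HH^{n-k}$ requires $n-k\geq k$, that is $n\geq 2k$. Finally $Y_j$ is contracted in $X_{n,k}$ by moving the $V_-$ frame within $\HH^{n-k}$ and collapsing onto a fixed frame, using the connectivity of $\Sp(n-k)$ and $\Sp(k)$.

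The main obstacle, deserving the most care, is to realize this deformation as a genuine single-valued homotopy on all of $W_j$. The SVD is defined only up to reordering and up to the stabiliser of each singular value, so the deformation must be expressed through the canonical orthogonal projections $\pi_\pm(x)$ onto $V_\pm(x)$ rather than through any choice of SVD, and one must check continuity where singular values collide (away from $a_j$) and that the corrected block $T$ can be selected continuously, invoking Lemma~\ref{lem:todo} to control kernels. Granting this, the $k+1$ sets $W_j$ constitute a categorical cover, yielding $\cat X_{n,k}\leq k$ and hence the equality.
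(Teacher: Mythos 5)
This theorem is not proved in the paper: it is quoted from Nishimoto \cite{MR2319267}, and the surrounding text only records the three known routes to it (counting eigenvalues of an associated complex matrix, the Cayley transform of \cite{supercayley} via Theorem~\ref{thm:cayleystiefel}, and Morse--Bott functions as in \cite{KadzMimu2011}). Your proposal reconstructs the first of these routes. The lower bound is complete and correct: either the appeal to Theorem~\ref{thm:kishi} (since $n\geq 2k\geq 2k-1$) or the cup-length of the exterior algebra $H^*(X_{n,k})$ on $k$ odd-degree generators suffices. The covering step of the upper bound is also sound: $P^*P$ is quaternionic Hermitian, hence has at most $k$ real (right) eigenvalues, so among $k+1$ values $a_j$ at least one lies in the resolvent set, and the sets $W_j=\{\block{T}{P}\mid \sdet(P^*P-a_jI_k)\neq 0\}$ cover $X_{n,k}$. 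You also correctly locate where $n\geq 2k$ enters, namely in fitting an isometric copy of $V_-$ inside $\HH^{n-k}$.

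The genuine gap is that the only substantive claim --- each $W_j$ is categorical --- is not proved but deferred (``Granting this\dots''). Two specific holes remain. First, Theorem~\ref{thm:shapespn} is a pointwise normal form whose factors $a,b,m,\ell$ and block sizes $p,q,r$ are neither unique nor continuous in the matrix, so it cannot by itself furnish a homotopy on $W_j$; one must instead use continuous functional calculus on $P^*P$ (for instance $P_t=P\,g_t(P^*P)$, $T_t=T\,w_t(P^*P)$ with $g_t,w_t$ chosen so that $T_t^*T_t+P_t^*P_t=I_k$, continuous on $[0,1]\setminus\{a_j\}$) and check continuity where eigenvalues collide and at the spectral endpoints $0$ and $1$. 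Second, after retracting onto the stratum where $P^*P$ is an orthogonal projection, $W_j$ decomposes into clopen pieces according to $\rank\pi_+$, and the contraction of each piece inside $X_{n,k}$ is asserted only by appeal to ``the connectivity of $\Sp(n-k)$ and $\Sp(k)$'', which is not an argument. Note that the paper's own machinery is designed to avoid exactly these difficulties: Theorem~\ref{thm:cayleystiefel} produces open sets $\Omega(Q)$ that are categorical by an explicit Cayley-transform contraction, reducing the upper bound to a purely combinatorial covering statement. As written, your text is a correct strategy with its central analytic step missing.
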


\begin{remark}
From Theorem~\ref{thm:kishi},  Theorem~\ref{thm:nishi} and (\ref{equa:dimconnec}), we can deduce for instance:
$\cat X_{3,2}=2$ and $ \cat X_{4,3}= 4$.
\end{remark}

Nishimoto's result can also be proven (\cite{supercayley}) from  Cayley open subsets, defined as follows.

\begin{theorem}[{\cite[Theorem 1.2]{supercayley}}]\label{thm:cayleystiefel}
Let $P\in \HH^{k\times k}$ be an $n$-admissible matrix.
The Cayley open subset
$$\Omega(P)=\left\{\block{\tau}{\pi}\in X_{n,k}\mid \pi+P^*\text{ invertible }\right\}$$
is categorical in $X_{n,k}$.
\end{theorem}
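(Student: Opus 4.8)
The plan is to verify two things: that $\Omega(P)$ is open in $X_{n,k}$, and that the inclusion $\Omega(P)\hookrightarrow X_{n,k}$ is null-homotopic, the contraction being produced by a Cayley transform whose domain of definition is governed exactly by the invertibility of $\pi+P^*$. For openness I would argue directly from the Study determinant: by its first property the invertible matrices form the open set $\{M\in\HH^{k\times k}\mid\sdet(M)\neq 0\}$, and the assignment $\block{\tau}{\pi}\mapsto\pi+P^*$ is the restriction to $X_{n,k}$ of a continuous (indeed affine) map $\HH^{n\times k}\to\HH^{k\times k}$. Hence $\Omega(P)$ is the preimage of an open set and is open.

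For the contraction I would first use that $P$ is $n$-admissible: by Proposition~\ref{prop:etrestiefelornot} it admits a completion, and Theorem~\ref{thm:shapespn} supplies an explicit $A_P\in\Sp(n)$ whose lower-right block is $P$. This fixes both a reference frame $x_\ast=\rho(A_P)\in X_{n,k}$ and the normal form in which the subsequent computation is transparent. The idea is then to attach to every $x=\block{\tau}{\pi}\in\Omega(P)$ a canonical path $t\mapsto g_t(x)$ in $\Sp(n)$, depending continuously on $x$, with $g_0(x)=I_n$ and $g_1(x)\cdot x=x_\ast$, obtained by taking the Cayley transform of the relative position of $x$ with respect to the reference data coming from $A_P$, rescaling the resulting vector-space coordinate linearly from $t=0$ to $t=1$, and transforming back. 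Setting $H(x,t)=g_t(x)\cdot x$ produces a map $H\colon\Omega(P)\times[0,1]\to X_{n,k}$ with $H(x,0)=x$ and $H(x,1)=x_\ast$, i.e.\ a contraction of $\Omega(P)$ in $X_{n,k}$. A pleasant feature is that, since $H$ acts by left multiplication by elements of $\Sp(n)$ and $g^*g=I_n$ for $g\in\Sp(n)$, one has $H(x,t)^*H(x,t)=x^*x=I_k$ automatically, so every $H(x,t)$ genuinely lies in $X_{n,k}$ and no frame needs to be renormalised by hand.

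The hard part will be well-definedness of $H$ for \emph{all} $t\in[0,1]$, not merely at $t=0$: the Cayley transform, and each of its rescaled versions, requires a certain denominator matrix to be invertible, and one must propagate the invertibility that holds at $t=0$ by the defining condition $\pi+P^*\in\{\,\sdet\neq 0\,\}$ along the whole interval. Here I would compute the Study determinant of the denominator and invoke Lemma~\ref{lem:sylvester} to trade a determinant of size $n-k$ for one of size $k$ (or conversely), reducing the question to the invertibility of a matrix built from $\pi+P^*$ alone; a positivity/convexity estimate on the linear rescaling of the Cayley coordinate should then show this determinant cannot vanish for any $t\in[0,1]$. I expect the continuity of the assignment $x\mapsto g_t(x)$ to be the other delicate point, since $\Omega(P)$ is in general not contractible in itself (already for $k=1$, $n=2$ one finds $\Omega(P)\simeq S^3\subset S^7$), so the deformation must genuinely exploit the ambient manifold; the explicit Cayley formula is what furnishes the required global, canonical choice. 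Once these two points are secured, the endpoint identities $H(\cdot,0)=\mathrm{id}$ and $H(\cdot,1)\equiv x_\ast$ are formal, and $\Omega(P)$ is categorical in $X_{n,k}$.
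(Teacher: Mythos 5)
First, a point of order: the paper does not prove Theorem~\ref{thm:cayleystiefel} at all --- it is quoted from \cite[Theorem 1.2]{supercayley} --- so there is no in-paper proof to measure yours against; I can only judge the proposal as a proof. The openness part is fine ($\block{\tau}{\pi}\mapsto\pi+P^*$ is continuous and $\{\sdet\neq 0\}$ is open), and the overall architecture (an explicit Cayley-type chart in which the contraction becomes linear) is indeed the kind of argument \cite{supercayley} runs. But as written the proof has a hole exactly where the work is. You never write down the Cayley transform you intend to use: what ``the Cayley transform of the relative position of $x$ with respect to $A_P$'' is as a formula, which vector space it lands in, and why the inverse transform is defined along the whole rescaled segment $t\mapsto tY(x)$, $t\in[0,1]$. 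Consequently the central claim --- that the relevant denominator remains invertible for every $t$ because $\pi+P^*$ is invertible at $t=0$ --- rests on ``a positivity/convexity estimate \dots should then show''; that estimate \emph{is} the theorem, and it cannot be left to optimism. It is genuinely delicate: as you yourself observe, $\Omega(P)$ need not be contractible in itself, and moreover the completion $\block{T}{P}$ of an admissible $P$ need not lie in $\Omega(P)$ (take $P=0_k$ with $n\geq 2k$: then $\Omega(0_k)=\{\pi\ \text{invertible}\}$ does not contain $\block{T}{0_k}$), so the deformation must leave $\Omega(P)$ and a naive ``chart centered at $x_\ast$ plus straight-line contraction'' picture does not directly apply.

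A secondary gap is the existence of the family $g_t$. Asking for a continuous $g_1\colon\Omega(P)\to\Sp(n)$ with $g_1(x)\cdot x=x_\ast$ amounts to producing a continuous section of the principal bundle $\Sp(n)\to X_{n,k}$ over $\Omega(P)$; this is not automatic for an arbitrary open subset and must be extracted from the explicit Cayley formula, which again is not supplied. (The intermediate points $H(x,t)$ lying in $X_{n,k}$ is, as you say, automatic from $g_t(x)^*g_t(x)=I_n$; that part is fine.) To turn the sketch into a proof you would need to: (i) fix the completion $\block{T}{P}\in X_{n,k}$ and write the transform $x\mapsto C(x)$ explicitly in terms of $(\pi+P^*)^{-1}$; (ii) identify the image of $C$ as a star-shaped subset of a real vector space; and (iii) verify, e.g.\ with $\sdet$ and Lemma~\ref{lem:sylvester} as you suggest, that the matrix whose invertibility is needed to invert $C$ at the parameter $t$ is nonsingular for all $t\in[0,1]$. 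Steps (i)--(iii) are announced but not performed, so the proposal is a plausible plan rather than a proof.
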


\begin{remark}
Let   $\diag({0_s,-I_t,I_r})=\begin{bmatrix}
0_{s}&0&0\cr
0&-I_t&0\cr
0&0&I_{r}
\end{bmatrix}\in \HH^{k\times k}$  be the diagonal matrix defined by blocks from the null matrix
$0_s\in \HH^{s\times s}$ and the identity matrices
$I_t\in\HH^{t\times t}$, $I_r\in\HH^{r\times r}$, with  $s+t+r=k$ and $s,t,r\geq 0$. Then $\diag({0_{s},-I_t,I_r})$ is 
$n$-admissible if and 
only if $r+t\geq e$. In this case, we have the categorical open subset of $X_{n,k}$
$$\Omega({0_s,-I_t,I_r})=\left\{\block{T}{P}\in X_{n,k}\mid P+\diag({0_s,-I_t,I_r})
\text{ invertible }\right\}.$$
\end{remark}

From Theorem~\ref{thm:shapespn}, we determine an explicit minimal  categorical open cover of $\Sp(2)$.

\begin{proposition}\label{prop:sp2}
The four open subsets
$\Omega(I_2)$, $\Omega(-I_2)$, $\Omega(I_1,-I_1)$ and $\Omega(-I_1,I_1)$
constitute a categorical open cover of $\Sp(2)$. 
\end{proposition}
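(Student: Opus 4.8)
The plan is to show that the four Cayley open subsets $\Omega(I_2)$, $\Omega(-I_2)$, $\Omega(I_1,-I_1)$, $\Omega(-I_1,I_1)$ are each categorical in $X_{2,2}\cong\Sp(2)$ (which holds by Theorem~\ref{thm:cayleystiefel}, since all four diagonal matrices are $2$-admissible: here $k=n=2$ so the excess is $e=2$, and each of $\diag(0_s,-I_t,I_r)$ used satisfies $r+t=2\geq e$), and then prove that together they cover all of $\Sp(2)$. The first part is immediate from the cited theorem, so the real content is the covering claim. Minimality then follows from the lower bound $\cat\Sp(2)=3$ recalled in the introduction, since a cover by three categorical sets would force $\cat\Sp(2)\leq 2$; hence four sets are necessary and our cover is minimal.

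For the covering argument I would use the relative SVD of Theorem~\ref{thm:shapespn}, specialized to $k=n=2$. In that case $n-k=0$, so $p'=q+r$ has no room and the matrix $A\in\Sp(2)$ reduces to its lower-right block $P$; concretely, every $A\in\Sp(2)$ is itself of the form $P=a\,D\,b^*$ with $a,b\in\Sp(2)$ and $D=\diag(\cos\theta_i)$ a diagonal matrix of singular values lying in $[0,1]$. An element $\block{T}{P}$ (here with $T$ the empty $0\times 2$ block, so the element is just $P=A$) lies in $\Omega(M)$ exactly when $P+M^*$ is invertible, where $M$ ranges over the four matrices $I_2,-I_2,\diag(I_1,-I_1),\diag(-I_1,I_1)$ (note each such $M$ is self-adjoint, so $M^*=M$). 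Thus I must show that for every $A\in\Sp(2)$, at least one of the four matrices $A+I_2$, $A-I_2$, $A+\diag(1,-1)$, $A+\diag(-1,1)$ is invertible.

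The key step is to reduce invertibility of $A+M$ to a computation on singular values via the Study determinant. Using the multiplicativity property (2) of $\sdet$ together with $A=a\,D\,b^*$, one can try to diagonalize or at least simplify $\sdet(A+M)$; however the presence of the two independent unitaries $a,b$ and the non-commutativity of quaternions means $A+M$ is not simultaneously diagonalizable, so a direct eigenvalue argument is blocked. The cleaner route, which I would follow, is to argue by contradiction: suppose all four of $A-I_2$, $A+I_2$, $A+\diag(1,-1)$, $A+\diag(-1,1)$ are singular. Each singularity says that one of the scalars $\pm1$ is a right eigenvalue of $A$ in the appropriate sense, equivalently that $P^*P=A^*A$ has an eigenvalue $1$ forced along a prescribed line, and I would translate these four conditions into incompatible constraints on the singular value decomposition $A=a\,D\,b^*$.

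The main obstacle will be precisely this translation: converting ``$A+M$ is singular'' into a statement about the singular values $\cos\theta_i$ and the unitaries $a,b$, while respecting that over the quaternions left and right eigenvalues behave differently and $\sdet$ only detects invertibility rather than giving a characteristic polynomial. I expect the cleanest handling to come from Lemma~\ref{lem:sylvester} and property (1) of the Study determinant, writing $A+M=a(D+a^*Mb)b^*$ so that by multiplicativity $A+M$ is invertible iff $D+a^*Mb$ is, and then exhausting the finitely many block-shapes of $D$ permitted by Theorem~\ref{thm:shapespn} (that is, how many singular values equal $1$, how many lie strictly in $]0,1[$, and how many equal $0$) to check that in each shape at least one choice of $M$ yields an invertible $D+a^*Mb$. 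Carrying out this finite case analysis on the at-most-three distinct block types is the concrete calculation I would then grind through, but the conceptual difficulty lives entirely in setting up the correspondence between the four Cayley sets and the possible singular-value patterns of $A$.
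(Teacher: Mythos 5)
Your preliminaries are fine (admissibility of the four diagonal matrices via $r+t=2\geq e$, Theorem~\ref{thm:cayleystiefel}, and minimality from $\cat\Sp(2)=3$), but the core of your covering argument has a genuine gap: specializing Theorem~\ref{thm:shapespn} to $k=n=2$ yields nothing. When $n=k$ the element $A$ is itself symplectic, so $A^*A=I_2$, every singular value equals $1$, and the theorem forces $p'=q=r=0$, $p=2$, hence $D=I_2$. The ``finitely many block-shapes of $D$'' you plan to exhaust therefore collapse to the single case $D=I_2$, and the statement to be checked in that case --- that for all $a,b\in\Sp(2)$ at least one of $I_2+a^*Mb$ is invertible --- is, after writing $A=ab^*$, literally the original claim with a change of variables. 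No reduction has occurred and all of the work remains. Your fallback (reading singularity of $A+M$ as ``$\pm1$ is a right eigenvalue of $A$'') is also off-track: it does not make sense for the mixed-sign matrices $\diag(1,-1)$ and $\diag(-1,1)$, and over $\HH$ invertibility of $A-\lambda I$ is governed by \emph{left} eigenvalues, precisely the objects this paper's methods are designed to avoid.

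The paper instead applies the relative SVD to the block decomposition with $k=1$, $n=2$ (Corollary~\ref{cor:shapespn}): every $A\in\Sp(2)$ is written with four $1\times1$ blocks as $\begin{bmatrix} m\cos\theta\,\ell^* & -m\sin\theta\,b^*\cr a\sin\theta\,\ell^* & a\cos\theta\,b^*\end{bmatrix}$ with $a,b,m,\ell$ unit quaternions and $\cos\theta\in[0,1]$; the nontrivial angle here is the singular value of the bottom-right \emph{entry}, not of $A$. Properties of the Study determinant and Lemma~\ref{lem:sylvester} then reduce invertibility of $A+\diag(\varepsilon_1,\varepsilon_2)$ to the nonvanishing of a single quaternion in $Q_1=\varepsilon_1\ell^*m$ and $Q_2=\varepsilon_2 b^*a$. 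When $\cos\theta\neq1$ the bad locus is the single equation $Q_1=-(\cos\theta+Q_2)^{-1}(1+\cos\theta\,Q_2)$, whose right-hand side is a fixed unit quaternion once $\varepsilon_2$ is fixed, so it cannot hold for both signs of $\varepsilon_1$; when $\cos\theta=1$ the matrix is block diagonal and the four sign patterns dispose of the cases $\ell^*m=\mp1$, $b^*a=\mp1$. Your argument would need to be rebuilt along these lines; as written it does not get off the ground.
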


\begin{proof}
From Theorem~\ref{thm:shapespn}, we know that any element of $\Sp(2)$ can be written as
$$P=
\begin{bmatrix}
m\cos\theta \;\ell^*&-m\sin\theta \;b^*\cr
a\sin\theta \;\ell^*&a\cos\theta\; b^*
\end{bmatrix},
$$
where $a$, $b$, $m$, $\ell$ are quaternionic numbers of norm~1 and $\cos\theta\in [0,1]$. We set
$\varepsilon_i=\pm 1$ for $i=1,\,2$ and
$\diag(\varepsilon_1,\varepsilon_2)=
\begin{bmatrix}
\varepsilon_1&0\cr
0&\varepsilon_2
\end{bmatrix}$.
We observe $\diag(\varepsilon_1,\varepsilon_2)^2=I_2$ and we are looking for the property 
``$P+\diag(\varepsilon_1,\varepsilon_2)$ is invertible".
Lemma~\ref{lem:sylvester} and easy calculations imply that

\begingroup
\addtolength{\jot}{0.5em}
\begin{align*}
P+\diag(\varepsilon_1,\varepsilon_2)
\sim\ &
\begin{bmatrix}
m&0\cr
0&a
\end{bmatrix}
\begin{bmatrix}
\cos\theta&-\sin\theta\cr
\sin\theta&\cos\theta
\end{bmatrix}
\begin{bmatrix}
\ell^*&0\cr
0&b^*
\end{bmatrix}
+\diag(\varepsilon_1,\varepsilon_2)\\
\sim\ &
\begin{bmatrix}
\ell^*&0\cr
0&b^*
\end{bmatrix}
\diag(\varepsilon_1,\varepsilon_2)
\begin{bmatrix}
m&0\cr
0&a
\end{bmatrix}
\begin{bmatrix}
\cos\theta&-\sin\theta\cr
\sin\theta&\cos\theta
\end{bmatrix}
+I_2\\
\sim\ &
\begin{bmatrix}
\varepsilon_1 \ell^*m&0\cr
0&\varepsilon_2 b^*a
\end{bmatrix}
\begin{bmatrix}
\cos\theta&-\sin\theta\cr
\sin\theta&\cos\theta
\end{bmatrix}
+I_2\\
\sim\ &
\begin{bmatrix}
\varepsilon_1 \ell^*m&0\cr
0&\varepsilon_2 b^*a
\end{bmatrix}
+
\begin{bmatrix}
\cos\theta&\sin\theta\cr
-\sin\theta&\cos\theta\cr
\end{bmatrix}\\
=\ &
\begin{bmatrix}
\varepsilon_1 \ell^*m+\cos\theta&\sin\theta\cr
-\sin\theta&\varepsilon_2 b^*a+\cos\theta\cr
\end{bmatrix}.
\end{align*}
\endgroup

We set  $Q_1=\varepsilon_1\ell^*m$ and  $Q_2=\varepsilon_2b^*a$.

\medskip

\emph{Suppose $\cos\theta\neq 1$.} This implies $\sin\theta\neq 0$ and we may use it as a ``pivot" in the last matrix. This gives,  by adding to the second row a left multiple of the first row,
$$P+\diag(\varepsilon_1,\varepsilon_2)
\sim
\begingroup
\def\arraystretch{1.4}
\begin{bmatrix}
\cos\theta+ Q_1&\sin\theta\cr
-\sin\theta-(\sin\theta)^{-1}(\cos\theta+ Q_2)(\cos\theta+ Q_1)&0\cr
\end{bmatrix}.
\endgroup
$$
Thus $P+\diag(\varepsilon_1,\varepsilon_2)$ is not inversible if and only if
\begin{eqnarray*}
&& \sin^2\theta+(\cos\theta+Q_2)(\cos\theta+Q_1)=0\\
&\Leftrightarrow&
 1+\cos\theta \,Q_2+(\cos\theta+Q_2)Q_1=0\\
&\Leftrightarrow&
Q_1=- (\cos\theta+Q_2)^{-1}(1+\cos\theta\, Q_2).
\end{eqnarray*}

The last writing makes sense since $\cos\theta\neq 1$ implies  $\cos\theta+Q_2\neq 0$ because $|Q_2|=1$.
If $(\varepsilon_1,\varepsilon_2)$ is given, 
the previous equation admits  a unique solution $(Q_1,Q_2)$.
Therefore, among the matrices of the statement, we can find a matrix $\diag(\varepsilon_1,\varepsilon_2)$
for which $P+\diag(\varepsilon_1,\varepsilon_2)$ is invertible. (In fact, two of them suffice in this case.)

\medskip
\emph{If $\cos\theta=1$}, then we have
$$P+\diag(\varepsilon_1,\varepsilon_2)
\sim
\begin{bmatrix}
 1+ Q^\prime_1&0\\
0&1+ Q^\prime_2\,
\end{bmatrix}.\qedhere$$
\end{proof}
Let us notice that we need the four matrices of the statement to ensure the existence of one case 
such that $P+\diag(\varepsilon_1,\varepsilon_2)$ is invertible. 
In fact, we already know from \cite{MR0182969}
that there is no categorical open cover of $\Sp(2)$ with strictly less than 4 elements.


\section{Subspace LS-category of $\Sp(k)$ \\in the Stiefel manifold $X_{n,k}$}\label{sec:levelone}

\begin{quote}
We give an upper bound for the subspace LS-category,
$\cat_{X_{2k-1,k}}\Sp(k)$,
of $\Sp(k)$ in $X_{n,k}$, for $n\geq 2k$, $n=2k-1$ and $n=2k-2$. A question for the general case is also proposed. 
\end{quote}

\medskip

If $n\geq 2k$, we first notice that the zero matrix $0_k\in\HH^{k\times k}$ is $n$-admissible. Therefore $\Sp(k)$
is included in the categorical open subset $ \Omega(0_k)$ and the next result follows.

\begin{proposition}\label{prop:spnrelative0}
If $0< 2k\leq n$, we have $\cat_{X_{n,k}}\Sp(k) =0$.
\end{proposition}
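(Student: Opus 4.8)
The plan is to exhibit a single categorical open subset of $X_{n,k}$ that already contains the whole image of $\Sp(k)$ under $\iota_{n,k}$; by the definition of subspace category this immediately forces $\cat_{X_{n,k}}\Sp(k)=0$, since a covering by $0+1=1$ contractible open set is then available.

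First I would verify that the zero matrix $0_k\in\HH^{k\times k}$ is $n$-admissible. By Proposition~\ref{prop:etrestiefelornot} this reduces to two conditions on $0_k^*0_k=0_k$: that its eigenvalues lie in $[0,1]$, which is immediate as they all equal $0$, and that the multiplicity of the eigenvalue $1$ is at least the excess $e=2k-n$. Since the hypothesis $2k\leq n$ gives $e\leq 0$, this second condition holds vacuously, exactly as remarked just after that proposition.

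Next I would apply Theorem~\ref{thm:cayleystiefel} to $P=0_k$. Because $0_k$ is $n$-admissible, the associated Cayley open subset
$$\Omega(0_k)=\left\{\block{\tau}{\pi}\in X_{n,k}\mid \pi\text{ invertible}\right\}$$
is categorical in $X_{n,k}$; here I have used $0_k^*=0_k$ to simplify the defining requirement ``$\pi+0_k^*$ invertible'' to ``$\pi$ invertible''.

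It then only remains to check the inclusion $\Sp(k)\subset\Omega(0_k)$. An element $P\in\Sp(k)$ is sent by $\iota_{n,k}$ to $\block{0}{P}$, and since $PP^*=I_k$ the block $P$ is invertible, whence $\block{0}{P}\in\Omega(0_k)$. Thus $\Sp(k)$ is covered by the single categorical open set $\Omega(0_k)$ and the equality $\cat_{X_{n,k}}\Sp(k)=0$ follows. I do not expect a genuine obstacle here: all the substance is carried by the admissibility criterion of Proposition~\ref{prop:etrestiefelornot} and the contractibility statement of Theorem~\ref{thm:cayleystiefel}, so the only points requiring care are the vacuous verification of the excess condition and the elementary observation that symplectic matrices are invertible.
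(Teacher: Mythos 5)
Your argument is correct and coincides with the paper's own proof: both verify that $0_k$ is $n$-admissible (the excess condition being vacuous for $e\leq 0$), invoke Theorem~\ref{thm:cayleystiefel} to get the categorical open set $\Omega(0_k)$, and observe that $\Sp(k)\subset\Omega(0_k)$ because symplectic matrices are invertible. You have merely spelled out the details that the paper leaves implicit.
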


Consider now the second case.

\begin{proposition}\label{prop:spnrelative1}
If  $n=2k-1$, $0< k $, we have 
$\cat_{X_{2k-1,k}}\Sp(k) \leq 1$.
\end{proposition}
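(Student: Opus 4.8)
The plan is to cover $\iota_{2k-1,k}(\Sp(k))$ by just two Cayley open subsets, exploiting that the excess here is exactly $e=2k-n=1$. The starting observation is that, since $P\in\Sp(k)$ is viewed inside $X_{2k-1,k}$ as $\block{0}{P}$, membership in a Cayley set simplifies: $\block{0}{P}\in\Omega(P_0)$ if and only if $P+P_0^*$ is invertible. So the whole problem reduces to producing two $n$-admissible matrices $P_1,P_2\in\HH^{k\times k}$ with the property that, for every $P\in\Sp(k)$, at least one of $P+P_1^*$, $P+P_2^*$ is invertible. Granting this, Theorem~\ref{thm:cayleystiefel} makes $\Omega(P_1)$ and $\Omega(P_2)$ categorical in $X_{2k-1,k}$, and they cover $\Sp(k)$, yielding $\cat_{X_{2k-1,k}}\Sp(k)\leq 1$.

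The choice I would make uses the single unit of excess sparingly. By Proposition~\ref{prop:etrestiefelornot}, a matrix is $n$-admissible as soon as the eigenvalues of $P_i^*P_i$ lie in $[0,1]$ and the eigenvalue $1$ occurs with multiplicity $\geq e=1$. I would therefore take $P_1=e_1e_1^*=\diag(1,0,\dots,0)$ and $P_2=-e_1e_1^*=\diag(-1,0,\dots,0)$: each has a single unit singular value and all other singular values $0$, so both are admissible, and both are self-adjoint so $P_i^*=P_i$.

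The main computational step is to decide invertibility of the rank-one updates $P+\varepsilon e_1e_1^*$, for $\varepsilon=\pm1$, and this is exactly where Lemma~\ref{lem:sylvester} does the work. Since $P\in\Sp(k)$ is invertible with $P^{-1}=P^*$, I would factor $\sdet(P+\varepsilon e_1e_1^*)=\sdet(P)\,\sdet(I_k+\varepsilon P^{-1}e_1e_1^*)$ and then collapse the rank-one term via Lemma~\ref{lem:sylvester} (with $M=\varepsilon P^{-1}e_1\in\HH^{k\times1}$ and $N=e_1^*\in\HH^{1\times k}$) to the $1\times1$ Study determinant $\sdet(1+\varepsilon\,e_1^*P^*e_1)=|1+\varepsilon\,\overline{P_{11}}|$. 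Hence $P+\varepsilon e_1e_1^*$ is invertible if and only if $P_{11}\neq-\varepsilon$. Consequently the exceptional loci are $\Sp(k)\setminus\Omega(P_1)=\{P:P_{11}=-1\}$ and $\Sp(k)\setminus\Omega(P_2)=\{P:P_{11}=1\}$, and since the $(1,1)$-entry $P_{11}$ cannot be both $1$ and $-1$ these two sets are disjoint; thus every $P\in\Sp(k)$ lies in $\Omega(P_1)\cup\Omega(P_2)$, which closes the argument.

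I expect the genuine obstacle to be conceptual rather than technical: recognizing that the positive excess should be ``spent'' on matrices with a single unit singular value and otherwise zero entries, so that the two failure conditions decouple into the disjoint scalar conditions $P_{11}=\pm1$. This is precisely what is unavailable for the full group $\Sp(2)$ in Proposition~\ref{prop:sp2}, where no $0$-slot is allowed and four sign matrices are provably necessary; here the one spare slot is exactly enough to drop from four sets to two. The only points requiring care are checking admissibility against Proposition~\ref{prop:etrestiefelornot} and verifying that the reduction of $\sdet(P+\varepsilon e_1e_1^*)$ to $|1+\varepsilon\overline{P_{11}}|$ is legitimate (which it is, as $\sdet P\neq0$ and the $1\times1$ Study determinant is a quaternion norm), after which the disjointness is immediate.
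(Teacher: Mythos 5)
Your proof is correct, but it reaches the conclusion by a genuinely different route from the paper. The paper also covers $\iota_{2k-1,k}(\Sp(k))$ by two Cayley sets based on admissible matrices with a single unit singular value --- it uses $\diag(0_{k-1},\pm I_1)$ where you use $\diag(\pm 1,0,\dots,0)$, a difference only of coordinate permutation --- but it decides invertibility of $P+\diag(0_{k-1},\pm I_1)$ by first writing $P$ in the relative SVD form of Theorem~\ref{thm:shapespn} (the $k=1$ case, Corollary~\ref{cor:shapespn}), with $m,\ell\in\Sp(k-1)$, $E\in\Sp(1)$ and an angle $\theta$, and then pivoting on the scalar $1+E\cos\theta$. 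You instead bypass the relative SVD entirely: since $P\in\Sp(k)$ is invertible, you factor out $P$, apply Lemma~\ref{lem:sylvester} to the rank-one update $I_k+\varepsilon P^*e_1e_1^*$, and reduce everything to the scalar Study determinant $\vert 1+\varepsilon\overline{P_{11}}\vert$, so that the two exceptional loci are the visibly disjoint sets $\{P_{11}=-1\}$ and $\{P_{11}=1\}$. Your argument is shorter and more elementary, and it isolates cleanly why one unit of excess suffices for a two-set cover; the paper's argument is longer but is deliberately built on the relative SVD, which is the machinery the paper is advertising and which is then indispensable in the harder case $n=2k-2$ (Proposition~\ref{prop:spnrelative2}), where a rank-two update no longer collapses to a single scalar condition. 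The only points to verify in your write-up --- the admissibility of $\pm e_1e_1^*$ via Proposition~\ref{prop:etrestiefelornot} (eigenvalue $1$ of $P_i^*P_i$ with multiplicity $1\geq e=1$), the identification of $\Omega(P_0)\cap\Sp(k)$ with $\{P:P+P_0^*\ \text{invertible}\}$ from Theorem~\ref{thm:cayleystiefel}, and the evaluation $\sdet([q])=\vert q\vert$ for a $1\times 1$ quaternionic matrix --- all check out.
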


\begin{proof}
Observe that the matrices $\diag(0_{k-1},I_1)$ and $\diag(0_{k-1},-I_1)$ are $(2k-1)$-admissible.
We decompose an element of $P\in\Sp(k)$ as
$$P=\begin{bmatrix}
m
\begin{bmatrix}
I_{k-2}&0\cr
0&\cos \theta
\end{bmatrix}
\ell^*
&
m\block{0}{-\sin\theta} E\cr
&\cr
[0\;\sin\theta]\ell^*&(\cos\theta) E
\end{bmatrix},
$$
with $m,\,\ell\in \Sp(k-1)$, $\cos\theta\in [0,1]$, $E\in \Sp(1)$.

\medskip

$\bullet$ Suppose $1+E\cos \theta\neq 0$. Then we have
\begin{align*}
&P+\diag(0_{k-1},I_1)\\
\sim\ &
m\begin{bmatrix}
I_{k-2}&0\cr
0&\cos\theta
\end{bmatrix} \ell^*
+m\block{0}{\sin\theta}EE^*(E^*+\cos\theta)^{-1}
[0\;\sin\theta]
\ell^*\\
\sim\ &
\begin{bmatrix}
I_{k-2}&0\cr
0&\cos\theta+\sin^2\theta(E^*+\cos\theta)^{-1}
\end{bmatrix}.
\end{align*}

Let us notice that 
$$\cos\theta+\sin^2\theta(E^*+\cos\theta)^{-1}=E^*(E+\cos\theta)(E^*+\cos\theta)^{-1}$$ is a 
quaternion of norm 1. Thus the matrix $P+\diag(0_{k-1},I_1)$ is invertible.

$\bullet$ If $1+E\cos \theta= 0$, then we have $\cos\theta=1$ and $E=-1$. This implies
$P=\begin{bmatrix}
m\ell^*&0\cr
0&-1\cr
\end{bmatrix}$
and $P\in \Omega(0_{k-1},-I_1)$.

In conclusion, we cover $\Sp(k)$ by the two open subsets 
$\Omega(0_{k-1},I_1)$
and
$\Omega(0_{k-1},-I_1)$,
which are contractible in $X_{2k-1,k}$.
\end{proof}

Finally, we state our last result in this direction.

\begin{proposition}\label{prop:spnrelative2}
If  $n=2k-2$, with $1< k $, we have
$\cat_{X_{2k-2,k}}\Sp(k) \leq 3$.
\end{proposition}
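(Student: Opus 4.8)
The plan is to produce four categorical open subsets of $X_{2k-2,k}$ that together cover $\Sp(k)$, in direct analogy with the four sets used for $\Sp(2)$ in Proposition~\ref{prop:sp2}. The natural candidates are the Cayley open sets $\Omega(0_{k-2},\varepsilon_1,\varepsilon_2)$ associated to the diagonal matrices $\diag(0_{k-2},\varepsilon_1,\varepsilon_2)\in\HH^{k\times k}$, for $(\varepsilon_1,\varepsilon_2)\in\{\pm 1\}^2$. Since the excess here is $e=2k-n=2$ and the matrix $\left(\diag(0_{k-2},\varepsilon_1,\varepsilon_2)\right)^{*}\diag(0_{k-2},\varepsilon_1,\varepsilon_2)=\diag(0_{k-2},1,1)$ has $1$ as an eigenvalue of multiplicity $2=e$, each such diagonal matrix is $(2k-2)$-admissible by Proposition~\ref{prop:etrestiefelornot}, so Theorem~\ref{thm:cayleystiefel} ensures that the four sets are contractible in $X_{2k-2,k}$. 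It thus remains to show that they cover $\Sp(k)$. Representing $P\in\Sp(k)$ inside $X_{2k-2,k}$ as $\block{0}{P}$ and using that $\diag(0_{k-2},\varepsilon_1,\varepsilon_2)$ is self-adjoint, membership of $P$ in $\Omega(0_{k-2},\varepsilon_1,\varepsilon_2)$ is exactly the invertibility of $P+\diag(0_{k-2},\varepsilon_1,\varepsilon_2)$.

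The first step is to reduce this $(k\times k)$ condition to a $(2\times 2)$ one. Writing $E=\diag(\varepsilon_1,\varepsilon_2)$ and $J=\block{0}{I_2}\in\HH^{k\times 2}$, we have $\diag(0_{k-2},\varepsilon_1,\varepsilon_2)=JEJ^{*}$, and since $P^{-1}=P^{*}$ the factorization $P+JEJ^{*}=P(I_k+P^{*}JEJ^{*})$ together with Lemma~\ref{lem:sylvester} (applied with $M=P^{*}JE$ and $N=J^{*}$) yields
\[
P+\diag(0_{k-2},\varepsilon_1,\varepsilon_2)\ \text{invertible}\iff I_2+P_{22}^{*}E\ \text{invertible},
\]
where $P_{22}=J^{*}PJ\in\HH^{2\times 2}$ is the lower-right block of $P$. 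Everything is therefore governed by this single block, which is a contraction: by Theorem~\ref{thm:shapespn} applied to $P$ with block size $2$ (equivalently, by the singular value decomposition of $P_{22}$) one may write $P_{22}=a\,\diag(\mu_1,\mu_2)\,b^{*}$ with $a,b\in\Sp(2)$ and singular values $\mu_1,\mu_2\in[0,1]$.

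I would then argue according to the number $s\in\{0,1,2\}$ of singular values of $P_{22}$ equal to $1$. When $s=2$ the block $P_{22}$ lies in $\Sp(2)$, hence $P_{22}^{-1}=P_{22}^{*}$ and $I_2+P_{22}^{*}E$ is invertible precisely when $P_{22}+E$ is; Proposition~\ref{prop:sp2} then furnishes a sign matrix $E$ realizing this, so $P$ lies in one of the four sets. When $s\leq 1$ I claim that already one of the two choices $E=I_2$ or $E=-I_2$ succeeds. Setting $G=P_{22}^{*}$ (a contraction with the same singular values), the matrix $I_2+G$ (resp. $I_2-G$) is singular only if some unit vector $x$ satisfies $Gx=-x$ (resp. $Gx=x$), which forces $x$ into the norm-one subspace $S=\{x\in\HH^{2}:\norm{Gx}=\norm{x}\}$, a quaternionic subspace of dimension $s$. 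If $s=0$ then $S=\{0\}$ and neither matrix is singular; if $s=1$ then $S=v\HH$ and $G$ acts there by $v\lambda\mapsto u\lambda$ with $u=Gv$ of norm one, so $I_2+G$ is singular iff $u=-v$ and $I_2-G$ iff $u=v$, two mutually exclusive alternatives. In every case some $E\in\{\pm 1\}^2$ makes $P+\diag(0_{k-2},\varepsilon_1,\varepsilon_2)$ invertible, so the four sets cover $\Sp(k)$ and $\cat_{X_{2k-2,k}}\Sp(k)\leq 4-1=3$.

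The routine part is the Sylvester reduction and the contractibility of the four sets; the genuine work is the last case analysis. The case $s=2$ is a clean funneling into Proposition~\ref{prop:sp2}, which is exactly why $\cat\Sp(2)=3$ controls the answer and why all four sets are needed (the bound is sharp already at $k=2$, where $X_{2,2}=\Sp(2)$). The main obstacle I anticipate is the partial-contraction regime $s\leq 1$: there $P_{22}$ is not unitary, so one cannot appeal to Proposition~\ref{prop:sp2}, and the argument must instead control $I_2\pm G$ over $\HH$ through the singular-subspace analysis above, where quaternionic non-commutativity has to be handled with care.
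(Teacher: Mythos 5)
Your proof is correct, and it takes a genuinely different route from the paper's. The paper works with the explicit relative SVD of Theorem~\ref{thm:shapespn} (taking a $2\times 2$ block at the bottom right) and performs block eliminations on the full $k\times k$ matrix $P+\diag(0_{k-2},\varepsilon_1,\varepsilon_2)$; because its pivot $H=a\,\diag(\cos\theta_1,\cos\theta_2)\,b^*+I_2$ must itself be assumed invertible, the argument splits into two steps according to whether $a^*b$ lies in $\Omega\!\begin{bmatrix}\cos\theta_1&0\cr 0&\cos\theta_2\end{bmatrix}$, with a further division according to whether $a^*b$ is diagonal, using the explicit form of non-diagonal matrices of $\Sp(2)$ from \cite{MR2519914} and a fair amount of quaternionic computation. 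You instead use $P^{-1}=P^*$ together with Lemma~\ref{lem:sylvester} to collapse the whole question in one stroke to the invertibility of $I_2+P_{22}^*E$, and then organize the case analysis by the number $s$ of unit singular values of $P_{22}$: the kernel argument via the isometry subspace $S=\ker(I_2-P_{22}P_{22}^*)$ (which is indeed a right $\HH$-submodule of rank $s$, since $I_2-P_{22}P_{22}^*$ is positive semi-definite) disposes of $s\leq 1$ in a few lines, and both approaches funnel the remaining case ($s=2$ for you, $\cos\theta_1=\cos\theta_2=1$ for the paper) into Proposition~\ref{prop:sp2}. I checked the key steps: the Sylvester reduction is applied correctly, the admissibility of $\diag(0_{k-2},\varepsilon_1,\varepsilon_2)$ holds since the multiplicity of the eigenvalue $1$ equals the excess $e=2$, and in the $s=1$ case the alternatives $Gv=-v$ and $Gv=v$ are indeed mutually exclusive. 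Your argument is shorter and more conceptual; the paper's, in exchange, records explicitly which of the four sets contains a given $P$ in terms of its relative-SVD data, in keeping with the theme of the article. Both establish $\cat_{X_{2k-2,k}}\Sp(k)\leq 3$.
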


\begin{proof}
We prove that the four open subsets 
$\Omega(0_{k-2},1,1)$,
$\Omega(0_{k-2},1,-1)$,
$\Omega(0_{k-2},-1,1)$,
$\Omega(0_{k-2},-1,-1)$
form a categorical open cover of $\Sp(k)$ in $X_{2k-2,k}$.
Let $P\in\Sp(k)$ that we write, by taking,  in Theorem \ref{thm:shapespn}, a block of size $2\times 2$ at the bottom right corner, as
\begin{equation}
P=
\begin{bmatrix}
m
\begin{bmatrix}
I_{k-4}&0&0\cr
0&\cos\theta_1&0\cr
0&0&\cos\theta_2\cr
\end{bmatrix}
\ell^*
&
m
\begin{bmatrix}
0_{k-4,1}&0_{k-4,1}\cr
-\sin\theta_1&0\cr
0&-\sin\theta_2\cr
\end{bmatrix}
b^*
\cr
&\cr
a
\begin{bmatrix}
0_{1,k-4}&\sin\theta_1&0\cr
0_{1,k-4}&0&\sin\theta_2\cr
\end{bmatrix}
\ell^*
&
a
\begin{bmatrix}
\cos\theta_1&0\cr
0&\cos\theta_2\cr
\end{bmatrix}
b^*\cr
\end{bmatrix},
\end{equation}
where $\cos\theta_1,\cos\theta_2\in [0,1]$, $a,b\in\Sp(2)$ and $m,\ell\in\Sp(k-2)$.

\medskip

\emph{First step.} Claim:
if $a^*b\in \Omega
\begin{bmatrix}
\cos\theta_1&0\cr
0&\cos\theta_2\cr
\end{bmatrix}
$
then 
$P\in \Omega(0_{k-2},I_{2})$.

Let 
$H=a\begin{bmatrix}
\cos\theta_1&0\cr
0&\cos\theta_2
\end{bmatrix}
b^*+I_2$.
The hypothesis on $a^*b$ implies the invertibility of $H$. Thus, we can use $H$ as a ``pivot''  to add to the first block of columns the second block multiplied on the right by  
$$X=H^{-1}a 
\begin{bmatrix}
0_{1,k-4}&\sin\theta_1&0\cr
0_{1,k-4}&0&\sin\theta_2\cr
\end{bmatrix}
\ell^*$$ 
and we get
\begingroup
\addtolength{\jot}{0.5em}
\begin{align}
\ &P+\diag(0_{k-2},I_{2})\nonumber\\
\sim\ &
m
\begin{bmatrix}
I_{k-4}&0&0\cr
0&\cos\theta_1&0\cr
0&0&\cos\theta_2
\end{bmatrix}
\ell^* \nonumber
\\
&\quad
- m
\begin{bmatrix}
0_{k-4,1}&0_{k-4,1}\cr
-\sin\theta_1&0\cr
0&-\sin\theta_2
\end{bmatrix}
b^*H^{-1}a
\begin{bmatrix}
0_{1,k-4}&\sin\theta_1&0\cr
0_{1,k-4}&0&\sin\theta_2
\end{bmatrix}
\ell^*\nonumber\\
\sim\ &
\begin{bmatrix}
I_{k-4}&0_{k-4,2}\cr
0_{2,k-4}&
\begin{bmatrix}
\cos\theta_1&0\cr
0&\cos\theta_2
\end{bmatrix}+
\begin{bmatrix}
\sin\theta_1&0\cr
0&\sin\theta_2
\end{bmatrix}
b^*H^{-1}a
\begin{bmatrix}
\sin\theta_1&0\cr
0&\sin\theta_2
\end{bmatrix}
\end{bmatrix}
\nonumber\\
\sim\ &
\begin{bmatrix}
\cos\theta_1&0\cr
0&\cos\theta_2
\end{bmatrix}+
\begin{bmatrix}
\sin\theta_1&0\cr
0&\sin\theta_2
\end{bmatrix}
b^*H^{-1}a
\begin{bmatrix}
\sin\theta_1&0\cr
0&\sin\theta_2
\end{bmatrix}. \label{equa:lastgeneral}
\end{align}
\endgroup

We observe that
$$b^*H^{-1}a=\left(
\begin{bmatrix}
\cos\theta_1&0\cr
0&\cos\theta_2
\end{bmatrix}+a^*b\right)^{-1}.
$$
We examine the different values of $\cos\theta_{1}$ and $\cos\theta_{2}$.

$\bullet$ First, suppose ``$\cos\theta_1\neq 1$ and $\cos\theta_2\neq 1$".
With usual arguments, we deduce
\begingroup
\addtolength{\jot}{0.5em}
\begin{align*}
P+\diag(0_{k-2},I_{2})
\sim\ &
\begin{bmatrix}
\frac{\cos\theta_1}{\sin^2\theta_1}&0\cr
0&\frac{\cos\theta_2}{\sin^2\theta_2}
\end{bmatrix}+
b^*H^{-1}a
\\
\sim\ &
a^*b\begin{bmatrix}
\cos\theta_1&0\cr
0&\cos\theta_2
\end{bmatrix}
+I_2 \\
\sim\ &
\begin{bmatrix}
\cos\theta_1&0\cr
0&\cos\theta_2
\end{bmatrix}+b^*a.
\end{align*}
\endgroup

Thus the hypothesis on $a^*b$ implies $P\in \Omega(0_{k-2},I_{2})$ in this case.

\medskip

$\bullet$ If $\cos\theta_1=\cos\theta_2=1$, then the hypothesis  implies immediately that 
$P\in \Omega(0_{k-2},I_{2})$.

\medskip

$\bullet$ It only remains to consider $\cos\theta_1=1$ and $\cos \theta_2\neq 1$. (Notice that the  case 
$\cos\theta_1\neq1$ and $\cos \theta_2= 1$ is similar.)
We denote $\theta=\theta_2$. 

--- Suppose $a^*b$ is diagonal, i.e., $a^*b=\begin{bmatrix}
u&0\cr
0&v
\end{bmatrix}
$.
The equality (\ref{equa:lastgeneral}) becomes
\begingroup
\addtolength{\jot}{0.5em}
\begin{align*}
P+\diag(0_{k-2},I_{2})
\sim\ &
\begin{bmatrix}
1&0\cr
0&\cos\theta
\end{bmatrix}
+
\begin{bmatrix}
0&0\cr
0&(v+\cos\theta)^{-1}\sin^2\theta
\end{bmatrix}
\\
\sim\ &
\begin{bmatrix}
1&0\cr
0&\cos\theta+(v+\cos\theta)^{-1}\sin^2\theta
\end{bmatrix}\\
\sim\ &
(v+\cos\theta)^{-1}\left((v+\cos\theta)\cos\theta+\sin^2\theta\right)\\
\sim\ &
1+v\cos\theta .
\end{align*}
\endgroup

 As $\cos\theta\neq 1$, the quaternionic number $1+v\cos\theta $ is  different from 0 and
 $P+\diag(0_{k-2},I_{2})$
 is invertible.
 
 --- If the matrix $a^*b$ is not diagonal, we know from
 \cite[Proposition 5.1]{MR2519914}
 that it has the form
 $a^*b=\begin{bmatrix}
u&-\ov{v}\gamma\cr
v&v\ov{u}v^{-1}\gamma
\end{bmatrix}
$
with $|\gamma|=1$,
$v\neq 0$ and $|v|^2+|u|^2=1$.
The equality (\ref{equa:lastgeneral}) becomes
 \begin{eqnarray*}
P+\diag(0_{k-2},I_{2})
&\sim&
\begin{bmatrix}
1&0\cr
0&\cos\theta
\end{bmatrix}
+
\begin{bmatrix}
0&0\cr
0&\sin\theta
\end{bmatrix}
b^*H^{-1}a
\begin{bmatrix}
0&0\cr
0&\sin\theta
\end{bmatrix}.
\end{eqnarray*}
We compute 
$$b^*H^{-1}a=\left(
\begin{bmatrix}
1&0\cr
0&\cos\theta
\end{bmatrix}
+a^*b\right)^{-1}.$$
Denote 
$$K=
\begin{bmatrix}
1&0\cr
0&\cos\theta
\end{bmatrix}
+a^*b 
=
\begin{bmatrix}
u+1&-\ov{v}\gamma\cr
v&
v\ov{u}v^{-1}\gamma+\cos\theta
\end{bmatrix}
.$$
 If $X$ is such that 
 $$v\ov{u}v^{-1}\gamma+\cos\theta-vX=0,$$
  then we have
\begingroup
\addtolength{\jot}{0.5em}
\begin{align*}
&K^{-1}\\
=&
\begin{bmatrix}
1&-X\cr
0&1
\end{bmatrix}
\,
\begin{bmatrix}
u+1&-\ov{v}\gamma-(u+1)X\cr
v&0
\end{bmatrix}^{-1}\\
=&
\begin{bmatrix}
1&-X\cr
0&1
\end{bmatrix}
\,
\begin{bmatrix}
0&v^{-1}\cr
(-\ov{v}\gamma-(u+1)X)^{-1}&
(\ov{v}\gamma+(u+1)X)^{-1}(u+1)v^{-1}
\end{bmatrix}.
\end{align*}
\endgroup

This implies
\begingroup
\addtolength{\jot}{0.5em}
\begin{align*}
&P+\diag(0_{k-2},I_{2})\\
\sim\ &
\cos\theta(\ov{v}\gamma+(u+1)X)+\sin^2\theta(u+1)v^{-1}\\
\sim\ &
\cos\theta(\ov{v}\gamma+(u+1)\ov{u}v^{-1}\gamma)+\cos\theta(u+1)v^{-1}\cos\theta
+\sin^2\theta(u+1)v^{-1}\\
\sim\ &
\cos\theta(\ov{v}v+u\ov{u}+\ov{u})v^{-1}\gamma+(u+1)v^{-1}\\
\sim\ &
\cos\theta(1+\ov{u})v^{-1}\gamma+(u+1)v^{-1}.
\end{align*}
\endgroup
If this last quaternionic number is equal to zero, we have an equality of modules:
$$(\cos\theta)\,|1+\ov{u}|\,|v|^{-1}=|1+u|\,|v|^{-1}$$
which is impossible since $\cos\theta\neq 1$  and $|1+u|\neq 0$. Therefore, in this last case, we have also
the inversibility of
$ P+\diag(0_{k-2},I_{2})$
and the claim is proven.

\smallskip

\emph{Second step.}
 Now we assume that
$a^*b\notin \Omega
\begin{bmatrix}
\cos\theta_1&0\cr
0&\cos\theta_2
\end{bmatrix}
$.
We observe:\\
$\bullet$  if $a^*b=\begin{bmatrix}
u&0\cr
0&v
\end{bmatrix}$
then the hypothesis implies ($\cos\theta_1=1$ and $u=-1$)
or
($\cos\theta_2=  1$ and $v=-1$).

\medskip

We develop the different cases.

-- Let $a^*b=\begin{bmatrix}
-1&0\cr
0&v
\end{bmatrix}$ with $\cos\theta_1=1$ and $\cos\theta_2\neq 1$.
We denote $\theta=\theta_2$. We replace
$b^*=\begin{bmatrix}
-1&0\cr
0&\ov{v}
\end{bmatrix}a^*$  by its value in the expression of $P$ and get
\begin{equation}
P=
\begin{bmatrix}
m
\begin{bmatrix}
I_{k-4}&0&0\cr
0&1&0\cr
0&0&\cos\theta
\end{bmatrix}
\ell^*
&
m
\begin{bmatrix}
0_{k-4,1}&0_{k-4,1}\cr
0&0\cr
0&-\ov{v}\sin\theta
\end{bmatrix}
a^*
\cr
&\cr
a
\begin{bmatrix}
0_{1,k-4}&0&0\cr
0_{1,k-4}&0&\sin\theta
\end{bmatrix}
\ell^*
&
a
\begin{bmatrix}
-1&0\cr
0&\ov{v}\cos\theta
\end{bmatrix}
a^*\cr
\end{bmatrix}.
\end{equation}
Using the bottom right-hand term as pivot  of $P+\diag(0,-I_2)$ gives, with computations similar to those in the first step,  that
$$P\in \Omega(0_{k-2},-1,-1).$$

-- The second case
with $\cos\theta_2=1$ and $\cos\theta_1\neq 1$, $v=-1$ gives the same result,
$$P\in \Omega(0_{k-2},-1,-1).$$

-- The last case, $\cos\theta_1=\cos\theta_2=1$ corresponds to $P=\begin{bmatrix}
m\ell^*&0\cr
0&ab^*\cr
\end{bmatrix}$
and $P\in \Omega(0_{k-2},-1,-1)$.

\bigskip
$\bullet$ if $a^*b$ is not diagonal, we shall prove that
$\cos\theta_1=\cos\theta_2=1$. In fact, $a^*b$ has the form
$a^*b=\begin{bmatrix}
u&-\ov{v}\gamma\cr
v&v\ov{u}v^{-1}\gamma
\end{bmatrix}
$
with $|\gamma|=1$,
$v\neq 0$ and $|v|^2+|u|^2=1$.
Then, 
 $a^*b\notin \Omega
\begin{bmatrix}
\cos\theta_1&0\cr
0&\cos\theta_2
\end{bmatrix}$
if and only if
$a^*b+\begin{bmatrix}
\cos\theta_1&0\cr
0&\cos\theta_2
\end{bmatrix}=\begin{bmatrix}
u+\cos\theta_1&-\ov{v}\gamma\cr
v&v\ov{u}v^{-1}\gamma+\cos\theta_2
\end{bmatrix}$ is not inversible.

From now on we shall denote $c_1=\cos\theta_1$ and $c_2=\cos\theta_2$.

As $v\neq 0$, we can take the matrix  $X=v^{-1}(v\ov{u}v^{-1}\gamma+c_2)$, so
$$\begin{bmatrix}
u+c_1&-\ov{v}\gamma\cr
v&v\ov{u}v^{-1}\gamma+c_2
\end{bmatrix} \sim
\begin{bmatrix}
u+c_1 &-\ov{v}\gamma-(u+c_1)X\cr
v& 0
\end{bmatrix},$$ 
which is not invertible if and only if 
$$-\ov{v}\gamma-(u+c_1)X=0.$$ 
It follows
\begin{align*}
-\bar v\gamma =& (u+c_1)v^{-1}(v\bar u v^{-1}\gamma + c_2)\\
=&\frac{1}{|v|^2}(|u|^2\bar v\gamma+u\bar v c_2+c_1\bar u \bar v \gamma+\bar v c_1c_2),
\end{align*}
hence
$$-|v|^2\bar v\gamma = |u|^2\bar v\gamma+u\bar v c_2+c_1\bar u \bar v \gamma+\bar v c_1c_2$$
and
$$-\bar v\gamma =u\bar v c_2+c_1\bar u \bar v \gamma+\bar v c_1c_2,$$
because $\vert u\vert^2+\vert v\vert^2=1$.
Finally,
$$ 
-(1+c_1\bar u)\bar v\gamma=(u+c_1)c_2\bar v
$$
and, taking modules,
\begin{equation} \label{Primera_desp}
|1+c_1u|=|u+c_1|c_2.
\end{equation}

We have:
\begin{enumerate}
\item
$c_2\neq 0$: if $c_2=0$, $c_1u=-1$ then $c_1|u|=1$ so $|u|\geq 1$, which is impossible because $v\neq 0$;
\item
let us suppose $c_2<1$: from equation (\ref{Primera_desp}) we have $|1+c_1u|^2 <|u+c_1|^2$ and we deduce $1-c_1^2<(1-c_1^2)|u|^2$, but $1-c_1^2\neq 0$, so $|u|^2>1$, a contradiction;
\item
now, as $c_2=1$, equation (\ref{Primera_desp}) is $|1+c_1u|=|u+c_1|$, which is equivalent to $1-c_1^2=(1-c_1^2)|u|^2$, but $|u|^2<1$, so $c_1^2=1$.
\end{enumerate}
Hence, $\cos\theta_1=\cos\theta_2=1$ as stated.

From Proposition~\ref{prop:sp2}, we deduce the result.
\end{proof}

The previous results lead naturally to the following intriguing question.

\begin{problem}
 Let $k$ and $j$ with $k\geq j$, do we have
 $${\cat}_{X_{2k-j,k}}\Sp(k) \leq \cat\Sp(j)?$$
\end{problem}

Propositions \ref{prop:spnrelative0}, \ref{prop:spnrelative1} and Proposition~\ref{prop:spnrelative2} 
give an affirmative answer for $j=0,\,1,\,2$.

\def\cprime{$'$}
\providecommand{\bysame}{\leavevmode\hbox to3em{\hrulefill}\thinspace}
\providecommand{\MR}{\relax\ifhmode\unskip\space\fi MR }
\providecommand{\MRhref}[2]{%
  \href{http://www.ams.org/mathscinet-getitem?mr=#1}{#2}
}
\providecommand{\href}[2]{#2}

\medskip

\end{document}